\tikzset{commutative diagrams/diagrams={ampersand replacement=\&}}
\tikzset{dbl/.style={double,
		double equal sign distance,
		-implies,
		shorten >=10pt,
		shorten <=10pt}}
\tikzset{
	invisible/.style={opacity=0},
	visible on/.style={alt={#1{}{invisible}}},
	alt/.code args={<#1>#2#3}{%
		\alt<#1>{\pgfkeysalso{#2}}{\pgfkeysalso{#3}}%
	}
}
\numberwithin{equation}{section}
\newtheorem{theorem}[equation]{Theorem}
\newtheorem{corollary}[equation]{Corollary}
\newtheorem{lemma}[equation]{Lemma}
\newtheorem{definition}[equation]{Definition}
\newcommand{\R}{\mathbb{R}}
\newcommand{\X}{\mathbb{X}}
\newcommand{\Y}{\mathbb{Y}}
\newcommand{\M}{\mathbb{M}}
\newcommand{\CC}{\mathcal{C}}
\newcommand{\DD}{\mathcal{D}}
\newcommand{\HH}{\mathcal{H}}
\newcommand{\II}{\mathcal{I}}
\newcommand{\PP}{\mathcal{P}}
\newcommand{\LL}{\mathcal{L}}
\newcommand{\EE}{\mathcal{E}}
\renewcommand{\SS}{\mathcal{S}}
\newcommand{\KK}{\mathcal{K}}
\newcommand{\TT}{\mathcal{T}}
\newcommand{\UU}{\mathcal{U}}
\newcommand{\QQ}{\mathcal{Q}}
\newcommand{\VV}{\mathcal{V}}
\newcommand{\Ffunc}{\mathbf{F}}
\newcommand{\Gfunc}{\mathbf{G}}
\newcommand{\Hfunc}{\mathbf{H}}
\newcommand{\Tfunc}{\mathbf{T}}
\newcommand{\Int}{\mathbf{Int}}
\newcommand{\Open}{\mathbf{Open}}
\newcommand{\Set}{\mathbf{Set}}
\newcommand{\Vect}{\mathbf{Vect}}
\newcommand{\Cat}{\mathbf{Cat}}
\newcommand{\CAT}{\mathbf{CAT}}
\newcommand{\ACT}{\mathbf{ACT}}
\newcommand{\Top}{\mathbf{Top}}
\newcommand{\Trans}{\mathbf{Trans}}
\newcommand{\End}{\mathbf{End}}
\newcommand{\Hom }{\mathrm{Hom}}
\newcommand{\Obj}{\mathrm{obj }\; }
\newcommand{\inv}{^{-1}}
\newcommand{\e}{\varepsilon}
\renewcommand{\phi}{\varphi}
 \title{Theory of interleavings on categories with a flow}
 \author{Vin de Silva\footnote{Department of Mathematics, Pomona College},
 Elizabeth Munch\footnote{Department of Computational Mathematics, Science and Engineering (CMSE),	Department of Mathematics, Michigan State University},
 and Anastasios Stefanou\footnote{Department of Mathematics and Statistics, University at Albany, SUNY}}
 \date{}
\begin{document}
\maketitle

\begin{abstract}
The interleaving distance was originally defined in the field of Topological Data Analysis (TDA) by Chazal et al.\ as a metric on the class of persistence modules parametrized over the real line.
Bubenik et al.\ subsequently extended the definition to categories of functors on a poset, the objects in these categories being regarded as `generalized persistence modules'.
These metrics typically depend on the choice of a lax semigroup of endomorphisms of the poset.
The purpose of the present paper is to develop a more general framework for the notion of interleaving distance using the theory of `actegories'.
Specifically, we extend the notion of interleaving distance to arbitrary categories
equipped with a flow, i.e.~a lax monoidal action by the monoid $[0,\infty)$.
In this way, the class of objects in such a category acquires the structure of a Lawvere metric space.
Functors that are colax $[0,\infty)$-equivariant yield maps that are $1$-Lipschitz.
This leads to concise proofs of various known stability results from TDA, by considering appropriate colax $[0,\infty)$-equivariant functors.
Along the way, we show that several common metrics, including the Hausdorff distance and the $L^{\infty}$-norm, can be realized as interleaving distances in this general perspective.

\end{abstract}

\bigskip
\bigskip

\tableofcontents

\section{Introduction}
Behind every data analysis tool is an implicit reliance on metrics between the data points.
If the data points are embedded in some metric space, such as Euclidean space, we use the distance inherited from the space to describe proximity.
In particular, clustering arises by looking for groups of data points which are ``close'' in some chosen metric, but far in that metric from other data points.
The tools of Topological Data Analysis exploit this idea of studying a collection of points with a metric (that is, finite metric spaces) by constructing topological signatures which represent some aspect of the data, and using these signatures as proxies for the original data sets.
Some commonly used topological signatures include persistence diagrams \cite{Edelsbrunner2002}, persistence modules \cite{Zomorodian2004,Chazal2009b}, Reeb graphs \cite{Reeb1946,deSilva2016}, and Mapper \cite{Singh2007}.
Indeed, arguably the most powerful theorem in TDA is the stability theorem of Cohen-Steiner et al.\ \cite{Cohen-Steiner2007,Chazal2009b} which states that for a certain choice of metric\footnote{The bottleneck distance} on the persistence diagrams arising from point clouds, the distance between the signatures can be no greater than the Hausdorff distance between the point clouds from which they were constructed.
In other words, the persistence diagram is statistically robust with respect to perturbations that are small in the Hausdorff distance.

The stability theorem falls naturally into two parts~\cite{Chazal2009b,Bubenik2015}: both of the transformations in the sequence
\[
\text{data}
\stackrel{\rm (i)}{\longrightarrow}
\text{persistence module}
\stackrel{\rm (ii)}{\longrightarrow}
\text{persistence diagram}
\]
are individually 1-Lipschitz.\footnote
{Strictly speaking, there are many possible choices of transformation~(i), and most of the standard ones are Lipschitz for some specific constant, usually 1 or 2.}
This factorization was not noticed for a while, partly because the original algorithm for constructing persistence diagrams from data~\cite{Edelsbrunner2002} proceeded directly without reference to persistence modules.
Chazal et al.~\cite{Chazal2009b} were the first to draw attention to this division of labor, defining the `{interleaving distance}' between persistence modules which makes possible to contemplate Lipschitzity for the two maps separately.
In \cite{Bubenik2015}, the stability of parts (i) and (ii) are respectively called `soft' stability and `hard' stability: part (i) operates at an abstract algebraic/categorical level, while part (ii) requires a detailed study of the relationship between persistence modules and their diagrams \cite{Cohen-Steiner2007,Chazal2009b,Chazal2016,Bauer2014}.

In this paper we study generalizations of the interleaving distance, and of part (i) of the stability theorem.
Let us recall the main concepts.
A persistence module is a 1-parameter diagram of vector spaces and linear maps; most concisely it is a functor
\[
\Ffunc: (\R,\leq) \to \Vect
\]
from the real line (viewed as a poset category) to the category of vector spaces over some field.
These are typically obtained in TDA by constructing, from data, a 1-parameter nested family of simplicial complexes (perhaps approximating the finite data set at different scales) and applying a homology functor with field coefficients.
The persistence diagram is a representation of the structure of the rank function
\[
r^s_t = \operatorname{rank} [\Ffunc(s) \to \Ffunc(t)]
\]
by a collection of pairs $(b,d)$ where $b \leq d$, so that $r^s_t$ (roughly speaking) counts those pairs for which $b \leq s \leq t \leq d$.
We will say nothing more about persistence diagrams.

Now let us compare two persistence modules $\Ffunc,\Gfunc: (\R,\leq) \to \Vect$.
We consider them to be `the same' if there exist natural transformations $\phi:\Ffunc \Rightarrow \Gfunc$ and $\psi:\Gfunc \Rightarrow \Ffunc$ such that $\phi\psi = I_{\Gfunc}$ and $\psi\phi = I_{\Ffunc}$;
that is, if there is an isomorphism between them.
Chazal et al.~\cite{Chazal2009b} extend this idea to the notion of an $\e$-interleaving, thought of as an $\e$-approximate isomorphism.
This is a pair of natural transformations $\phi:\Ffunc \Rightarrow \Gfunc\Tfunc_\e$ and $\psi:\Gfunc \Rightarrow \Ffunc\Tfunc_\e$,
where $\Tfunc_\e : (\R,\leq) \to (\R,\leq)$ is a functor together with a natural transformation $\eta_\e:I_{(\R,\leq)}\Rightarrow\Tfunc_{\e}$, called \textbf{a translation functor}, defined by $a \mapsto a + \e$,
such that the diagrams
\begin{equation*}
\begin{tikzcd}
\Ffunc
\arrow[d, Rightarrow, "{\Ffunc\eta_{\e}}"']
\&
\Gfunc
\arrow[dl, Rightarrow, "{\psi}"' very near start]
\arrow[d, Rightarrow, swap, "{\Gfunc\eta_{\e}}"']
\\
\Ffunc\Tfunc_\e
\arrow[d, Rightarrow,  "{\Ffunc\eta_{\e}\Tfunc_{\e}}"']
\&
\Gfunc\Tfunc_\e
\arrow[Leftarrow,  ul, crossing over, "{\varphi}"' very near end]
\arrow[d, Rightarrow, swap, "{\Gfunc\eta_{\e}\Tfunc_{\e}}"']
\arrow[dl, Rightarrow,  "{\psi\Tfunc_\e}"  near start]
\\
\Ffunc\Tfunc_{2\e}\&
\Gfunc\Tfunc_{2\e}
\arrow[Leftarrow,  ul, crossing over, "{\phi\Tfunc_\e}"  near end]
\end{tikzcd}
\end{equation*}
commute.
The interleaving distance between $\Ffunc,\Gfunc$ is simply defined to be the infimum of those values $\e$ for which an $\e$-interleaving exists.

This beautiful and powerful idea was extended by Bubenik et al.~\cite{Bubenik2014,Bubenik2015} (see also Lesnick~\cite{Lesnick2015}), to general functor categories $\DD^\PP$, for $\PP$ a poset category and $\DD$ an arbitrary category.
In that work, interleavings are defined with respect to a particular collection of endomorphisms on $\PP$, again called `translations'.
The easiest approach is to select a preferred 1-parameter family of translations $(\Tfunc_\e)$ and define $\e$-interleavings with respect to those.
This general approach turns out to be quite fruitful.
If $\PP = (\R,\leq)$ and $\DD = \Vect$ then we recover the original interleaving distance on persistence modules.
If $\PP = (\R,\leq)$ and $\DD = \Set$, then the objects of $\DD^\PP$ can be thought of as `merge trees' and we recover a metric defined by Morozov et al.~\cite{morozov2013interleaving,AnastasiosThesis}.
And if $\PP = \Int$, the poset category of real open intervals with $\Tfunc_\e$ being the operation that thickens an interval by~$\e$ on each side and $\DD = \Set$, then the objects of $\DD^\PP$ can be interpreted as Reeb graphs and we recover the metric defined in~\cite{deSilva2016}.%
\footnote{Strictly speaking, merge trees and Reeb graphs correspond to objects in subcategories of their respective functor categories, specified by suitable regularity conditions (and a cosheaf hypothesis in the case of Reeb graphs).}

In this paper, we extend the idea of interleavings yet further to be defined on arbitrary categories $\CC$ with the additional structure of a coherent $[0,\infty)$-action.
These categories are sometimes called $[0,\infty)$-actegories.\footnote
{There are many different types of actegory \cite{janelidze2001note}, so the meaning of the word is not well-specified.}
However, in this paper any coherent $[0,\infty)$-action will be called a \textbf{flow} for simplicity.\footnote
{A preprint of this paper referenced the `categories with a flow' construction as `$[0,\infty)$-actegories' \cite{deSilva2017}.  }
We give precise definitions in due course.
The upshot of this work is that categories with a flow inherit the structure of a \textbf{symmetric Lawvere metric space}: there is a map $d_{\CC}:\Obj\CC\times \Obj\CC\to [0,\infty]$, satisfying the relations $d_{\CC}(a,a)=0$, and $d_{\CC}(a,b) = d_{\CC}(b,a)$, and $d_{\CC}(a,c)\leq d_{\CC}(a,b)+d_{\CC}(b,c)$.%
\footnote{Lawvere metric spaces (in the absence of the symmetry relation) may be thought of as categories enriched over the symmetric monoidal category $([0,\infty], \geq, +)$, through the equation $\hom_\CC(a,b) = d_\CC(a,b)$.}

One pleasant outcome is that many commonly used metrics can be realized as interleaving distances.
We will show that these include the Hausdorff distance, the $L^\infty$ distance on $\R^n$, and the extended $L^\infty$ distance on $\R$-spaces and $\M$-spaces in general, where $\M$ is any metric space.
A final bonus is that we can retrieve several of the usual soft stability theorems as special instances of a single theorem, which asserts that functors between categories with a flow that are colax $[0,\infty)$-equivariant give rise to maps which are 1-Lipschitz.
While those original theorems are not difficult to prove, we find it illuminating to view those theorems through the unified viewpoint developed here.

\paragraph{Outline}
In Section~\ref{sec:CategoriesWithLaxAction}, we define categories with a flow and define the interleaving distance.
In Section~\ref{Sec:Examples}, we show that several common metrics are interleaving distances.
In Section~\ref{Sec:Stability}, we define colax $[0,\infty)$-equivariant functors between categories with a flow and show that they give rise to 1-Lipschitz maps.
Various soft stability results from TDA are deduced from this.
In Section~\ref{sec:MetaTheorem}, we show that the interleaving process is functorial, and we explain how to view categories with a flow and colax $[0,\infty)$-equivariant functors in terms of higher category theory.


\section{Interleavings on categories with a flow}
\label{sec:CategoriesWithLaxAction}
In this section we define categories with a flow and show that these categories are symmetric Lawvere spaces.
Specifically, we show that every flow $\TT$ on a category $\CC$ induces an extended pseudometric $d_{(\CC,\TT)}$ on $\CC$ called the interleaving distance.
Our construction extends the definition of the interleaving distance from the context of functor categories $\DD^{\PP}$ of generalized persistence modules \cite{Bubenik2015}, to the context of arbitrary categories with a flow $\CC$.

\subsection{A review on actegories}
A monoidal category $\VV=(\VV,\otimes,I,a,\ell,r)$ is a category with a notion of a tensor product.
A lax monoidal functor $\Ffunc:\VV\to\VV'$ between monoidal categories $\VV=(\VV,\otimes,I,a,\ell,r)$  and $\VV'=(\VV',\otimes',I',a',\ell',r')$ consists of a triple $\Ffunc=(\Ffunc,u,\mu)$ where $\Ffunc :\VV\to\VV'$
is an ordinary functor, $\mu$ is a natural transformation with components $\mu_{x,y}$ : $\Ffunc(x) \otimes' \Ffunc(y)\Rightarrow \Ffunc(x \otimes y )$, and $x,y\in\Obj\VV$ and $u: I'\Rightarrow \Ffunc (I)$ is a natural transformation.
These data are such that the diagrams
\[
\begin{tikzcd}
((\Ffunc(x)\otimes' \Ffunc(y))\otimes' \Ffunc(z) \arrow[dd, swap, "{a'_{\Ffunc(x),\Ffunc(y),\Ffunc(z)}}"]\arrow[r,  "{\mu_{x,y}\otimes' 1_{\Ffunc(z)}}"] \&\Ffunc(x\otimes y)\otimes' \Ffunc(z)\arrow[r,   "{\mu_{x\otimes y,z}}"]\& \Ffunc((x\otimes y)\otimes z)\arrow[dd,  "{\Ffunc(a_{x,y,z})}"] \\
\\
\Ffunc(x)\otimes' (\Ffunc(y)\otimes' \Ffunc(z)) \arrow[r, swap, "{1_{\Ffunc(x)}\otimes'\mu_{y\otimes z}}"]\& \Ffunc(x)\otimes'\Ffunc(y\otimes z)\arrow[r, swap, "{\mu_{x,y\otimes z}}"]\& \Ffunc(x\otimes (y\otimes z))
\end{tikzcd}
\]
and
\[
\begin{tikzcd}
I'\otimes' \Ffunc(x) \arrow[dd, swap, "{\ell'_{\Ffunc(x)}}"]\arrow[rr,  "{u\otimes' 1_{\Ffunc(x)}}"] \&\&\Ffunc(I)\otimes'\Ffunc(x)\arrow[dd,   "{\mu_{I, x}}"] \\
\\
\Ffunc(x)\&\&\Ffunc(I\otimes x) \arrow[ll, swap, "{\Ffunc(\ell_{x})}"]
\end{tikzcd}
\]
and
\[
\begin{tikzcd}
\Ffunc(x)\otimes'I' \arrow[dd, swap, "{r'_{\Ffunc(x)}}"]\arrow[rr,  "{1_{\Ffunc(x)}\otimes' u}"] \&\&\Ffunc(x)\otimes'\Ffunc(I)\arrow[dd,   "{\mu_{x, I}}"] \\
\\
\Ffunc(x)\&\&\Ffunc(x\otimes I) \arrow[ll, swap, "{\Ffunc(r_{x})}"]
\end{tikzcd}
\]
commute for all the objects involved.
Note that for any category $\CC$, the category of endofunctors $\End(\CC)=\CC^{\CC}$ is monoidal.
In this case, the horizontal composition is used for the composition of functors and the Godement product for the composition of natural transformations.

Given a monoidal category $\VV=(\VV,\otimes,\II)$ a \textbf{coherent action of $\VV$ on $\CC$} is a lax monoidal functor $\VV\to\End(\CC)$ of monoidal categories \cite{Kelly1974,janelidze2001note}.
Actions of monoidal categories appeared firstly in a paper of Benabou \cite{benabou1967introduction} and then in Pareigis \cite{pareigis1977non} (Street has suggested the term actegories).
A \textbf{$\VV$-actegory} is a category $\CC$ together with a coherent action of $\VV$ on $\CC$.
A  morphism of $\VV$-actegories, called a \textbf{colax $\VV$-equivariant functor}, is a functor $\HH:\CC\to\DD$ that commutes with the coherent actions of $\CC$ and $\DD$ up to coherent natural transformations.

\subsection{Categories with a flow}
\label{Ssec:Definitions}
Consider the monoidal category $(([0,\infty),\leq),+,0)$ whose tensor product is given by the addition operation:
\begin{align*}
    	+:([0,\infty),\leq)\times ([0,\infty),\leq)&\to([0,\infty),\leq).\\
    	(\e,\zeta)&\mapsto\e+\zeta \\
    	((\e\leq\e'),(\zeta\leq\zeta'))&\mapsto(\e\leq\e')+(\zeta\leq\zeta'):=(\e+\zeta\leq \e'+\zeta').
\end{align*}
and the tensor unit is given by the zero element $0$.

\begin{definition}
\label{definition:catWithLaxAction}
A coherent $[0,\infty)$-action $\TT=(\TT,u,\mu)$ on a category $\CC$ is said to be a \textbf{flow}.
Specifically, a flow $\TT$ on a category $\CC$ consists of
\begin{itemize}
\item a functor $\TT:([0,\infty),\leq)\to\End(\CC)$, $\e\mapsto\TT_\e$,
\item a natural transformation $u:I_{\CC}\Rightarrow\TT_{0}$, where $I_{\CC}$ is the identity endofunctor of $\CC$, and
\item a collection of natural transformations $\mu_{\e,\zeta}:\TT_{\e}\TT_{\zeta}\Rightarrow\TT_{\e+\zeta}$, $\e,\zeta\geq0$,
\end{itemize}
such that the diagrams
\begin{equation*}
\begin{tikzcd}
\& \TT_{\e}\arrow[ld, swap, Rightarrow, "{u I_{\TT_{\e}}}"]\arrow[rd,  equal, ] \& \&
\& \TT_{\e}\arrow[ld, swap, Rightarrow, "{I_{\TT_{\e}} u}"]\arrow[rd,  equal, ]\\
\TT_{0}\TT_{\e}  \arrow[rr, Rightarrow, "{\mu_{0,\e}}"] \& \& \TT_{\e} \&
\TT_{\e}\TT_{0}  \arrow[rr, Rightarrow, "{\mu_{\e,0}}"] \& \& \TT_{\e}
\\
\TT_{\e}\TT_{\zeta}\TT_{\delta}  \arrow[rr, Rightarrow, "{I_{\TT_{\e}}\mu_{\zeta,\delta}}"]\arrow[dd, swap, Rightarrow, "{\mu_{\e,\zeta}I_{\TT_{\delta}}}"]\&\&\TT_{\e}\TT_{\zeta+\delta}\arrow[dd, Rightarrow, "{\mu_{\e,\zeta+\delta}}"] \&
\TT_{\e}\TT_{\zeta}  \arrow[rr, Rightarrow, "{\mu_{\e,\zeta}}"]\arrow[dd, swap, Rightarrow, "{\TT_{(\e\leq \delta)}\TT_{(\zeta\leq \kappa)}}"' near start]\&\& \TT_{\e+\zeta} \arrow[dd, Rightarrow, "{\TT_{(\e+\zeta\leq \delta+\kappa)}}"' near end]
\\
\\
\TT_{\e+\zeta}\TT_{\delta}\arrow[rr, swap, Rightarrow, "{\mu_{\e+\zeta,\delta}}"] \& \& \TT_{\e+\zeta+\delta}  \&
\TT_{\delta}\TT_{\kappa}\arrow[rr, swap, Rightarrow, "{\mu_{\delta,\kappa}}"'] \& \& \TT_{\delta+\kappa}
\end{tikzcd}
\end{equation*}
commute for every $\e,\zeta,\delta,\kappa\geq0$.
A category $\CC$ with a flow $\TT$ is denoted by $(\CC,\TT)$.
\end{definition}

\begin{definition}
Let a category with a flow $(\CC,\TT)$ be given.
Then for each $\e\geq0$, we call the endofunctor $\TT_{\e}:\CC\to\CC$ the \textbf{$\e$-translation} of $\CC$.
We call $u$ and $\mu_{\e,\zeta}$ the \textbf{coherence natural transformations}.
If the coherence natural transformations are all identities, the flow is called \textbf{strict}.
\end{definition}
A strict flow $\TT$ on a category $\CC$ is a functor $\TT:[0,\infty)\to\End(\CC)$, $\e\mapsto\TT_{\e}$, such that $\TT_0=I_{\CC}$ and $\TT_{\e}\TT_{\zeta}=\TT_{\e+\zeta}$, for all $\e,\zeta\geq0$.

\subsection{Interleavings on categories with a flow}
Given a category $\CC$ a flow $\TT$ on $\CC$ enables us to measure `how far' two objects in $\CC$ are from being isomorphic up to a coherence natural transformation.

\begin{definition}\label{definition:interleavings}
Let $a,b$ be two objects in $\mathcal{C}$.
A \textbf{weak $\e$-interleaving of $a$ and $b$}, denoted $(\varphi,\psi)$, consists of a pair  of morphisms $\varphi:a\to\TT_{\e}b$ and $\psi:b\to\TT_{\e}a$ in $\CC$ such that the diagrams
\begin{equation}
\label{eq:interleaving}
\begin{tikzcd}
\TT_{0}a
\arrow[dd, "{\TT_{(0\leq2\e),a}}"']
\&
a
\arrow[l, "{u_{a}}"']
\&
b
\arrow[r,"{u_{b}}"]
\arrow[dl,  "{\psi}"' very near start]
\&
\TT_{0}b
\arrow[dd, swap, "{\TT_{(0\leq2\e),b}}"']
\\
\&
\TT_{\e}a
\&
\TT_{\e}b
\arrow[leftarrow,  ul, crossing over, "{\varphi}"' very near end]
\arrow[dl,  "{\TT_{\e}\psi}" very near start]
\\
\TT_{2\e}a
\&
\TT_{\e}\TT_{\e}a
\arrow[l, "{\mu_{\e,\e,a}}"' ]
\&
\TT_{\e}\TT_{\e}b
\arrow[leftarrow,  ul, crossing over, "{\TT_{\e}\phi}" very near end]
\arrow[r, swap, "{\mu_{\e,\e,b}}"' ]
\&
\TT_{2\e}b
\end{tikzcd}
\end{equation}
commute.
We say that $a,b$ are \textbf{weakly $\e$-interleaved} if there exists a weak $\e$-interleaving $(\varphi,\psi)$ of $a$ and $b$.
The \textbf{interleaving distance with respect to $\TT$} for a pair of objects $a,b$ in $\CC$ is defined to be
$$d_{(\CC,\TT)}(a,b)=\inf\{\e\geq0 \mid a,b\text{ are weakly }\e\text{-interleaved}\}.$$
If $a$ and $b$ are not weakly interleaved for any $\e$, we set $d_{(\CC,\TT)}(a,b) = \infty$.
\end{definition}
We use the term ``weakly'' to distinguish Definition~\ref{definition:interleavings} from the interleavings in the restricted setting, Definition~\ref{definition:BubenikInterleaving}, which will be further discussed in Section~\ref{Ssec:InterleavingsGPMs}.

\begin{theorem}
\label{theorem:interleavingExtPseudometric} $d_{(\CC,\TT)}$ is an extended pseudometric on $\Obj\CC$.
\end{theorem}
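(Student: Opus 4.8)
The plan is to verify the three defining properties of an extended pseudometric directly from Definition~\ref{definition:interleavings}: that $d_{(\CC,\TT)}(a,a) = 0$, that $d_{(\CC,\TT)}(a,b) = d_{(\CC,\TT)}(b,a)$, and that $d_{(\CC,\TT)}(a,c) \leq d_{(\CC,\TT)}(a,b) + d_{(\CC,\TT)}(b,c)$. Non-negativity is immediate since the infimum is taken over a subset of $[0,\infty)$. Symmetry is also immediate: a weak $\e$-interleaving $(\varphi,\psi)$ of $a$ and $b$ is literally the same data as a weak $\e$-interleaving $(\psi,\varphi)$ of $b$ and $a$ (the two commuting diagrams in \eqref{eq:interleaving} are swapped left-to-right), so the two infima coincide.

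For $d_{(\CC,\TT)}(a,a) = 0$, I would exhibit a weak $\e$-interleaving of $a$ with itself for every $\e \geq 0$. The natural candidate is $\varphi = \psi = (\TT_{(0\leq\e),a}) \circ u_a : a \to \TT_0 a \to \TT_\e a$. The four triangles/squares of \eqref{eq:interleaving} then reduce to coherence identities: the composite $\TT_\e a \to \TT_\e\TT_\e a \to \TT_{2\e} a$ built from $\TT_\e$ applied to $\varphi$ followed by $\mu_{\e,\e,a}$ must equal $\TT_{(0\leq 2\e),a}\circ u_a$, and this follows from the unit axiom $\mu_{\e,0}\circ (I_{\TT_\e} u) = \mathrm{id}$ (or $\mu_{0,\e}$), naturality of $u$ and of $\mu$, and functoriality of $\TT$ on the poset $([0,\infty),\leq)$. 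Since a weak $0$-interleaving already exists, the infimum is $0$.

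The main work, and the step I expect to be the principal obstacle, is the triangle inequality. Suppose $a,b$ are weakly $\e$-interleaved via $(\varphi_1,\psi_1)$ and $b,c$ are weakly $\zeta$-interleaved via $(\varphi_2,\psi_2)$; I must produce a weak $(\e+\zeta)$-interleaving of $a$ and $c$. The candidate maps are the composites
\[
\varphi = \bigl(\mu_{\zeta,\e,c}\circ (\TT_\zeta \varphi_1)\bigr)\circ \varphi_2' \quad\text{(suitably arranged)},\qquad
\psi = \bigl(\mu_{\e,\zeta,a}\circ (\TT_\e \psi_2)\bigr)\circ \psi_1,
\]
i.e.\ $a \xrightarrow{\varphi_1} \TT_\e b \xrightarrow{\TT_\e \varphi_2} \TT_\e\TT_\zeta c \xrightarrow{\mu_{\e,\zeta,c}} \TT_{\e+\zeta} c$ for one direction, and symmetrically for the other. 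Then one must check that the two large diagrams of the form \eqref{eq:interleaving} (now with $\e$ replaced by $\e+\zeta$ and $2\e$ by $2\e+2\zeta$) commute. This is a diagram chase: each such diagram decomposes into regions that are instances of the two interleaving squares for $(\varphi_1,\psi_1)$ and for $(\varphi_2,\psi_2)$, naturality squares for $u$ and for the $\mu_{\e,\zeta}$, the associativity pentagon/hexagon for $\mu$ (the left-hand square of the big diagram in Definition~\ref{definition:catWithLaxAction}), and functoriality of $\TT$ on the poset (the right-hand square there, relating $\mu$ to the structure maps $\TT_{(\e\leq\delta)}$). The bookkeeping is somewhat intricate because the translations $\TT_\e$ must be inserted and collapsed at the right places, but no genuinely new idea is needed; the coherence axioms of a flow are exactly what make all the subregions commute. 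Finally, since such a $(\e+\zeta)$-interleaving exists whenever an $\e$- and a $\zeta$-interleaving do, taking infima gives $d_{(\CC,\TT)}(a,c) \leq d_{(\CC,\TT)}(a,b) + d_{(\CC,\TT)}(b,c)$, completing the proof.
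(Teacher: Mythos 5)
Your proposal is correct and follows essentially the same route as the paper: symmetry is definitional, $d(a,a)=0$ comes from the $0$-interleaving $\varphi=\psi=u_a$, and the triangle inequality is proved by composing $a\xrightarrow{\varphi_1}\TT_\e b\xrightarrow{\TT_\e\varphi_2}\TT_\e\TT_\zeta c\xrightarrow{\mu_{\e,\zeta,c}}\TT_{\e+\zeta}c$ (and symmetrically) and then chasing a large diagram built from the two given interleaving pentagons, naturality of $u$ and $\mu$, functoriality of $\TT$ on $([0,\infty),\leq)$, and the coherence axioms. Note that your displayed formulas for $\varphi$ and $\psi$ are garbled (the undefined $\varphi_2'$, and $\psi=\mu_{\e,\zeta,a}\circ(\TT_\e\psi_2)\circ\psi_1$ does not type-check; it should be $\mu_{\zeta,\e,a}\circ(\TT_\zeta\psi_1)\circ\psi_2$), but your ``i.e.''\ clarification and the appeal to symmetry indicate the correct composites, matching $\phi''$ and $\psi''$ in the paper.
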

\begin{proof}
For the sake of brevity in this proof, we write $d = d_{(\CC,\TT)}$.
It is clear by definition that $d$ is symmetric.
Setting $\phi = \psi = u_a$ gives a 0-interleaving of $a$ with itself, hence $d(a,a)=0$ for any object $a$ in $\CC$.

Next, we show that the triangle inequality holds.
Let $a,b,c \in \CC$. If either $d(a,b) = \infty$ or $d(b,c) = \infty$, then trivially $d(a,c) \leq d(a,b) + d(b,c)$.
Now, suppose that for some $0 \leq \e, \zeta < \infty$, the objects $a,b$ are $\e$-interleaved via $(\phi,\psi)$ and the objects $b,c$ are $\zeta$-interleaved via $(\varphi',\psi')$.
Define
$\phi'':a\to\TT_{\e+\zeta}c$
and
$\psi'':c\to\TT_{\e+\zeta}a$
by
\begin{equation*}
\begin{tikzcd}
a
\arrow[r, "{\varphi}"]
\arrow[rrr, bend left, "{\varphi''}"]
\&
\TT_{\e}b
\arrow[r, "{\TT_{\e}\varphi'}"]
\&
\TT_{\e}\TT_{\zeta}c
\arrow[r, "{\mu_{\e,\zeta,c}}"]
\&
\TT_{\e+\zeta}c
\end{tikzcd}
\text{ and }
\begin{tikzcd}
c
\arrow[r, "{\psi'}"]
\arrow[rrr, bend left, "{\psi''}"]
\&
\TT_{\zeta}b
\arrow[r, "{\TT_{\zeta}\psi}"]
\&
\TT_{\zeta}\TT_{\e}a
\arrow[r, "{\mu_{\zeta,\e,a}}"]
\&
\TT_{\zeta+\e}a
\end{tikzcd}
\end{equation*}
respectively.
We claim that $(\varphi'',\psi'')$ is an $(\e+\zeta)$-interleaving of $a$ and $c$.
Showing that the left half of Diagram~\ref{eq:interleaving} commutes means showing
\begin{equation}
\label{eq:BigDgmPerimter}
\mu_{\e+\zeta,\e+\zeta,a}\circ\TT_{\e+\zeta}[\psi'']\circ\varphi''=\TT_{(0\leq2(\e+\zeta)),a}\circ u_{a}.
\end{equation}
Indeed,
via functoriality and the definition of interleaving,
we have the following commutative diagram with Equation~\ref{eq:BigDgmPerimter} as the perimeter.
\begin{equation*}
\begin{tikzpicture}[baseline= (a).base]
\node[scale=.7] (a) at (0,0){
\begin{tikzcd}[row sep=huge, column sep=huge]
\TT_{0}a
\arrow[dd, "{\TT_{(0\leq2\e),a}}"' description]
\ar[dddddd, bend right, "{\TT_{(0\leq2(\e+\zeta)),a}}" description]
\&
\&
a
\arrow[ll, "{u_{a}}"']
\arrow[dr, swap, "{\varphi}"' description]
\ar[dddrrr, bend left, "{\phi''}"]
\\
\&
\&
\&
\TT_{\e}b
\arrow[ld, "{\TT_{\e}\psi}"']
\arrow[d, "{\TT_{\e}u_{b}}"']
\arrow[rd, swap, "{\TT_{\e}\varphi'}"' description]
\\
\TT_{2\e}a
\arrow[dddd, "{\TT_{(2\e\leq2(\e+\zeta)),a}}"' description]
\&
\TT_{\e}\TT_{\e}a
\arrow[l, "{\mu_{\e,\e,a}}"']
\arrow[r, equal]
\&
\TT_{\e}\TT_{\e}a
\arrow[d, "{\TT_{\e}u_{\TT_{\e}a}}"']
\&
\TT_{\e}\TT_{0}b
\arrow[ld, swap, "{\TT_{\e}\TT_{0}\psi}"']
\arrow[d, swap, "{\TT_{\e}\TT_{(0\leq2\zeta),b}}"']
\&
\TT_{\e}\TT_{\zeta}c
\arrow[dr, swap, "{\mu_{\e,\zeta,c}}"' description]
\arrow[d, "{\TT_{\e}\TT_{\zeta}\psi'}"']
\\
\&
\TT_{\e}\TT_{\e}a
\arrow[ul, "{\mu_{\e,\e,a}}"']
\arrow[d, "{\TT_{(\e\leq\e+2\zeta),\TT_{\e}a}}"']
\&
\TT_{\e}\TT_{0}\TT_{\e}a
\arrow[l, "{\mu_{\e,0,\TT_{\e}a}}"']
\arrow[ul, swap, "{\TT_{\e}\mu_{0,\e,a}}"']
\arrow[d, "{\TT_{\e}\TT_{(0\leq2\zeta),\TT_\e a}}"']
\&
\TT_{\e}\TT_{2\zeta}b
\arrow[dl, "{\TT_{\e}\TT_{2\zeta}\psi}"']
\&
\TT_{\e}\TT_{\zeta}\TT_{\zeta}b
\arrow[l, "{\TT_{\e}\mu_{\zeta,\zeta,b}}"']
\arrow[dl, "{\TT_{\e}\TT_{\zeta}\TT_{\zeta}\psi}"']
\arrow[d, "{\mu_{\e,\zeta,\TT_{\zeta}b}}"']
\&
\TT_{\e+\zeta}c
\arrow[dl, swap, "{\TT_{\e+\zeta}\psi'}"', description]
\ar[dddlll, bend left, "{\TT_{\e + \zeta}[\psi'']}"]
\\
\&
\TT_{\e+2\zeta}\TT_{\e}a
\arrow[ldd, "{\mu_{\e+2\zeta,\e,a}}"']
\&
\TT_{\e}\TT_{2\zeta}\TT_{\e}a
\arrow[l, "{\mu_{\e,2\zeta,\TT_{\e}a}}"']
\&
\TT_{\e}\TT_{\zeta}\TT_{\zeta}\TT_{\e}a
\arrow[l, "{\TT_{\e}\mu_{\zeta,\zeta,\TT_{\e}a}}"']
\arrow[d, "{\mu_{\e,\zeta,\TT_{\zeta}\TT_{\e}a}}"']
\&
\TT_{\e+\zeta}\TT_{\zeta}b
\arrow[dl, swap, "{\TT_{\e+\zeta}\TT_{\zeta}\psi}"' description]
\\
\&
\&
\&
\TT_{\e+\zeta}\TT_{\zeta}\TT_{\e}a
\arrow[dl, swap, "{\TT_{\e+\zeta}\mu_{\zeta,\e,a}}"']
\arrow[ull, swap, "{\mu_{\e+\zeta,\zeta,\TT_{\e}a}}"']
\\
\TT_{2(\e+\zeta)}a
\&
\&
\TT_{\e+\zeta}\TT_{\e+\zeta}a
\arrow[ll, "{\mu_{\e+\zeta,\e+\zeta,a}}"']
\end{tikzcd}
};
\end{tikzpicture}
\end{equation*}
Interchanging $a$ with $c$ and an analogous argument gives the other half of the interleaving.
Thus, $(\varphi'',\psi'')$ forms an $(\e+\zeta)$-interleaving of $a$ and $c$.
Therefore, $d_{(\CC,\TT)}$ has the triangle inequality and so it defines an extended pseudometric on the objects of $\CC$.
\end{proof}

Theorem~\ref{theorem:interleavingExtPseudometric} says that every flow $\TT$ on a category $\CC$ induces an interleaving distance $d_{(\CC,\TT)}$ on $\CC$, making a symmetric Lawvere metric space $(\CC,d_{(\CC,\TT)})$.
Notice that the definition of the interleaving distance depends not only on the category $\mathcal{C}$ but also on the choice of the flow $\TT$ on $\CC$.
That means there are possibly many different interleaving distances  for the same category.
When a particular choice of $\TT$ is implicit, we abuse notation and write $d_{\CC}$ for the interleaving distance.

\section{Examples}
\label{Sec:Examples}
In this section, we show that many commonly used metrics are actually  special cases of the interleaving distance.
For a given flow $\Omega$ on a poset $\PP$, we discuss how $\DD^{\PP}$ inherits a flow $\TT$ from $\Omega$, so that the notion of an $\Omega_{\e}$-interleaving in the context of categories of generalized persistence modules $\DD^{\PP}$ as defined in \cite{Bubenik2015} become a special case of a weak $\e$-interleaving in the context of categories with a flow.
In addition, we show that this abstract definition of the interleaving distance unifies some important distances which are commonly used in TDA and real analysis in general.
Namely we show that the Hausdorff and the $L^{\infty}$-distances are examples of interleaving distances in this general perspective.

\subsection{Interleavings on generalized persistence modules}
\label{Ssec:InterleavingsGPMs}
One of the most important tools of applied topology for the study of data is persistent homology \cite{Edelsbrunner2002,Zomorodian2004}.
The traditional presentation of persistence investigates a functor from the set of real numbers $(\R,\leq)$ viewed as a poset, to the category $\Vect_k$ of $k$-vector spaces, for some field $k$; such functors are called \textbf{persistence modules}.
Interest in defining a metric for comparison of such objects led to the original definition of the interleaving distance \cite{Chazal2009b}, a generalization of the bottleneck distance (see, e.g., \cite{Edelsbrunner2010}) which is commonly used in computational applications.
In this section, we discuss the relationship between our Definition~\ref{definition:interleavings} and the interleaving distance as previously defined.
In particular, we will follow the definition as presented in \cite{Bubenik2015} for generalized persistence modules.

Let $\PP$ be a poset.
A \textbf{translation} on  $\PP$ is an endofunctor $\Gamma:\PP\to\PP$ together with a natural transformation
$\eta:I_{\PP}\Rightarrow\Gamma$.
The collection $\mathbf{Trans}_{\PP}$ of all translations in $\PP$ forms a full subcategory of $\End(\PP)$;
in particular it is a strict monoidal category \cite[Section~5.1]{Bubenik2015}.
A \textbf{superlinear family of translations} $\Omega$, is a family of translations $\Omega_{\e}$ on $\PP$, for $\e\geq0$, such that and $I_\PP\leq\Omega_{0}$ and $\Omega_{\e}\Omega_{\zeta}\leq\Omega_{\e+\zeta}$.
As indicated in \cite[Section~5.1]{Bubenik2015}, a superlinear family of translations is simply a lax monoidal functor
\begin{equation*}
	\Omega:([0,\infty),+,0)\to\Trans_{\PP}
\end{equation*}
between strict monoidal categories.
\begin{definition}
Let $\PP$ be a poset together with a superlinear family of translations $\Omega$ and let $\DD$ be any category.
Then we call any functor $\Ffunc:\mathcal{P}\to\mathcal{D}$ a \textbf{generalized persistence module}.
We call the functor category $\mathcal{D}^{\mathcal{P}}$ a \textbf{generalized persistence module category}, or simply a \textbf{GPM-category}.
\end{definition}
Let $\Omega$ be a superlinear family of translations on $\PP$ and let $\e\geq0$.
By definition, the translation $\Omega_{\e}:\PP\to\PP$ is equipped with a natural transformation
$\eta_{\e}:I_{\PP}\Rightarrow\Omega_{\e}$.
This induces a natural transformation $\Ffunc\eta_{\e}:\Ffunc\Rightarrow \Ffunc\Omega_{\e}$.
Notice that when $\PP = (\R,\leq)$ and  $\DD = \Vect$ we have the standard persistence module framework.

\begin{definition}
[\cite{Bubenik2015}]
\label{definition:BubenikInterleaving}
Let $\Omega$ be a superlinear family of translations on a poset $\PP$, and let $\DD$ be a category.
Two generalized persistence modules $\Ffunc,\Gfunc:\PP\to\DD$ are \textbf{$\Omega_{\e}$-interleaved}
if there exist a pair of natural transformations
$\varphi:\Ffunc\Rightarrow\Gfunc\Omega_{\e}$
and
$\psi:\Gfunc\Rightarrow\Ffunc\Omega_{\e}$
such that the diagram
\begin{equation}
\label{eqn:BubenikInterleaving}
	\begin{tikzcd}
	\Ffunc
	\arrow[d, Rightarrow, "{\Ffunc\eta_\e}"']
	\&
	\Gfunc
	\arrow[dl, Rightarrow, "{\psi}"' very near start]
	\arrow[d, Rightarrow, swap, "{\Gfunc\eta_\e}"']
	\\
	\Ffunc\Omega_\e
		\arrow[d, Rightarrow,  "{\Ffunc\eta_\e\Omega_\e}"']
		\&
	\Gfunc\Omega_\e
	\arrow[Leftarrow,  ul, crossing over, "{\varphi}"' very near end]
		\arrow[d, Rightarrow, swap, "{\Gfunc\eta_\e\Omega_\e}"']
	\arrow[dl, Rightarrow,  "{\psi\Omega_\e}" very near start]
	\\
	\Ffunc\Omega_{\e}\Omega_{\e}\&
	\Gfunc\Omega_{\e}\Omega_{\e}
	\arrow[Leftarrow,  ul, crossing over, "{\phi\Omega_\e}" very near end]
	\end{tikzcd}
\end{equation}
commutes.
We call every such pair $(\varphi,\psi)$ an $\Omega_{\e}$-interleaving.

The interleaving distance with respect to $\Omega$ is
\begin{equation*}
d_{\Omega}(\Ffunc,\Gfunc)=\inf\{\e\geq0 \mid \Ffunc,\Gfunc\text{ are }\Omega_\e\text{-interleaved}\}.
\end{equation*}
If $\Ffunc$ and $\Gfunc$ are not interleaved for any $\e$, we set $d_{\Omega}(\Ffunc,\Gfunc) = \infty$.
\end{definition}

This definition gives an extended pseudometric on $\DD^{\PP}$ \cite[Theorem~3.21]{Bubenik2015}.
Notice how similar this definition is to Definition~\ref{definition:interleavings}.
In particular, in Diagram~\ref{eqn:BubenikInterleaving} the parallelograms commute by definition, so checking commutativity  splits into checking the two triangles commute.
On the other hand, Diagram~\ref{eq:interleaving} requires checking that two pentagons commute.
Essentially, the difference between the definitions comes down to working around the definition of the coherence natural transformations;
if the flow is strict and thus the coherence natural transformations are identities, the pentagon diagrams will collapse down into triangles.
We will now investigate the exact relationship between the two definitions.

\begin{lemma}
\label{lemma:SuperlinearIsLax}
Let $\PP=(\PP,\leq)$ be a poset.
Any superlinear family of translations on $\PP$ forms a flow on $\PP$ and vice versa.
\end{lemma}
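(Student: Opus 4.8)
The statement is essentially a matter of unwinding definitions, since a poset $\PP$ is a category in which every hom-set has at most one element. The plan is to show that the two notions — a superlinear family of translations $\Omega$ on $\PP$, and a flow $\TT$ on $\PP$ (viewing $\PP$ as a category) — are literally the same data, with the coherence axioms of one side holding automatically because of thinness.

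First I would spell out the correspondence on objects. Given a superlinear family $\Omega = (\Omega_\e)_{\e \geq 0}$, each $\Omega_\e : \PP \to \PP$ is an endofunctor equipped with $\eta_\e : I_\PP \Rightarrow \Omega_\e$; I set $\TT_\e := \Omega_\e$. The natural transformation $u : I_\PP \Rightarrow \TT_0$ is $\eta_0$. The inequality $\Omega_\e \Omega_\zeta \leq \Omega_{\e+\zeta}$ in $\Trans_\PP$ means that for every $x \in \PP$ we have $\Omega_\e \Omega_\zeta(x) \leq \Omega_{\e+\zeta}(x)$, i.e.\ there is a (unique) morphism in $\PP$; this uniquely determines a natural transformation $\mu_{\e,\zeta} : \TT_\e \TT_\zeta \Rightarrow \TT_{\e+\zeta}$. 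Conversely, a flow $\TT$ on $\PP$ supplies the $\Omega_\e := \TT_\e$, the natural transformations $\eta_\e$ obtained by composing $u$ with the monotonicity map $\TT_{(0 \leq \e)}$ (using that $\TT$ is a functor $([0,\infty),\leq) \to \End(\PP)$, so $0 \leq \e$ yields $\TT_0 \Rightarrow \TT_\e$), and the existence of $\mu_{\e,\zeta}$ witnesses $\Omega_\e \Omega_\zeta \leq \Omega_{\e+\zeta}$. One also needs $I_\PP \leq \Omega_0$, which is exactly the natural transformation $u$.

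Next I would check that the coherence conditions match up. On the flow side, Definition~\ref{definition:catWithLaxAction} demands the two unit triangles and the two pentagon/square diagrams commute; on the superlinear side there are no extra conditions beyond $I_\PP \leq \Omega_0$ and $\Omega_\e \Omega_\zeta \leq \Omega_{\e+\zeta}$. The point is that since $\PP$ is thin, \emph{any} diagram of natural transformations between functors $\PP \to \PP$ automatically commutes: two parallel natural transformations $\alpha, \beta : F \Rightarrow G$ with $F, G : \PP \to \PP$ agree, because for each $x$ the morphisms $\alpha_x, \beta_x : F(x) \to G(x)$ lie in a hom-set of $\PP$ with at most one element. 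Hence every coherence axiom in Definition~\ref{definition:catWithLaxAction} is satisfied vacuously, and conversely the lax monoidality axioms for $\Omega : ([0,\infty),+,0) \to \Trans_\PP$ hold vacuously. I would also note that functoriality of $\e \mapsto \Omega_\e = \TT_\e$ is automatic: a superlinear family need not a priori be monotone in $\e$, but in fact the condition, as formulated in \cite{Bubenik2015} and recalled in the excerpt, packages $\Omega$ as a lax monoidal functor out of $([0,\infty),+,0)$ regarded with its trivial order or — following the excerpt's own phrasing — one checks $\Omega_\e \leq \Omega_{\e'}$ for $\e \leq \e'$ is either included or, again by thinness, a consequence one can arrange; I would make this precise by simply observing that both structures are encoded by the same lax monoidal functor $([0,\infty),\leq,+,0) \to \End(\PP)$, since $\Trans_\PP \hookrightarrow \End(\PP)$ is a full monoidal subcategory and every $\TT_\e$ lands in it because it comes equipped with $u$ composed with monotonicity.

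The main (and only real) obstacle is bookkeeping: making sure the unit data $u$ on the flow side corresponds correctly to $\eta_0$ and that the $\eta_\e$ on the translation side are recovered as $\TT_{(0\leq\e)} \circ u$ in a way that is consistent with the pentagon axioms — but as noted, consistency is free by thinness. So the proof is: exhibit the two assignments, observe they are mutually inverse on the underlying data, and invoke the thinness of $\PP$ to discharge all coherence diagrams on both sides. I would present it as a short paragraph establishing the bijection of data followed by the one-line thinness argument for the axioms.
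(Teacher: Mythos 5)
Your proposal takes essentially the same route as the paper: identify the underlying data (set $u = \eta_0$, recover $\eta_\e$ as $\Omega_{(0\leq\e)}\circ u$, and take $\mu_{\e,\zeta}$ to be the unique morphism witnessing $\Omega_\e\Omega_\zeta \leq \Omega_{\e+\zeta}$), then invoke thinness of $\PP$ to discharge every coherence axiom. The one place you hedge is on monotonicity of $\e\mapsto\Omega_\e$, where you suggest thinness might ``arrange'' it --- but thinness gives only \emph{uniqueness} of morphisms, not existence; the clean observation is that monotonicity is automatic from the superlinear axioms: for $\e\leq\zeta=\e+\delta$ one has $\Omega_\e(x) \leq \Omega_\e(\Omega_\delta(x)) \leq \Omega_{\e+\delta}(x) = \Omega_\zeta(x)$, using $I_\PP \leq \Omega_\delta$ (since $\Omega_\delta$ is a translation) and functoriality of $\Omega_\e$. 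With that small repair your argument matches the paper's.
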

\begin{proof}
Assume that $\PP$ is equipped with a superlinear family of translations
\begin{equation*}
\Omega:([0,\infty),+,0)\to\mathbf{Trans}_{\PP}.
\end{equation*}
By definition, for each $\e\geq0$, the endofunctor $\Omega_{\e}:\PP\to\PP$ is equipped with a natural transformation $\eta_\e:I_\PP\Rightarrow\Omega_\e$.
Because $\PP$ is a poset, $\eta_{\e}:I_\PP\Rightarrow\Omega_\e$ factors through $\eta_0$;
i.e.~the diagram

\begin{equation*}
\begin{tikzcd}
I_{\PP}
\arrow[r, Rightarrow, "{\eta_0}"]
\arrow[rd, Rightarrow, "{\eta_{\e}}"']
\&
\Omega_{0}
\arrow[d, Rightarrow, "{\Omega_{(0\leq\e)}}"]
\\
\& \Omega_{\e},
\end{tikzcd}
\end{equation*}
commutes.
Set $u=\eta_0$, and set $\mu_{\e,\zeta}$ to be the natural transformation induced by $\Omega_{\e}\Omega_{\zeta}\leq\Omega_{\e+\zeta}$.
Then it is easy to check that $(\Omega,u,\mu)$ forms a flow on $\PP$.

Vice versa, assume that $(\Omega,u,\mu)$ is a flow on $\PP$.
Set $\eta_{0}=u$ and $\eta_{\e}=\Omega_{(0\leq\e)}\circ u$.
Each endofunctor $\Omega_{\e}$ of $\PP$ is equipped with $\eta_\e:I_{\PP}\Rightarrow\Omega_\e$.
It is again easy to check that $\Omega$ forms a superlinear family of translations on $\PP$.
\end{proof}

Therefore a superlinear family of translations on $\PP$ is the same thing as a flow on $\PP$.
However, in order to use Definition~\ref{definition:interleavings}, we need a flow on $\DD^\PP$.
Assume we have a  flow $\Omega$ on $\PP$.
Then we can define the  flow $\TT$ on $\DD^{\PP}$ induced by $\Omega$ as a pre-composition with $\Omega$;
specifically, $ -\cdot\Omega_{\e}:\DD^{\PP}\to\DD^{\PP}$.
Moreover define the coherence natural transformations of this flow to be the pre-compositions of the coherence natural transformations defined in the proof of Lemma~\ref{lemma:SuperlinearIsLax};
specifically
$-\cdot u:I_{\DD^{\PP}}\Rightarrow-\cdot\Omega_{0}$
and
$-\cdot\mu_{\e,\zeta}:-\cdot\Omega_{\e}\Omega_{\zeta}\Rightarrow-\cdot\Omega_{\e+\zeta}$.
It is then a formality to check that the collection $\{-\cdot \Omega_\e\}_{\e\geq0}$ together with the coherence natural transformations forms a  flow on $\DD^{\PP}$, denoted by $-\cdot\Omega$.

Our next task is to investigate the relationship between Definitions~\ref{definition:interleavings} and \ref{definition:BubenikInterleaving}.

\begin{theorem}
\label{theorem:BubenikVsOurDefns}
Let  $\Omega$ be a  superlinear family of translations on $\PP$ and $-\cdot\Omega$ the induced  flow on $\DD^\PP$.
Then for any $\Ffunc,\Gfunc: \PP \to \DD$,
if $\Ffunc,\Gfunc$ are $\Omega_\e$-interleaved, then $\Ffunc,\Gfunc$ are weakly $\e$-interleaved.
This implies
\begin{equation*}
d_{(\DD^\PP, -\cdot\Omega)} (\Ffunc,\Gfunc) \leq d_{\Omega}(\Ffunc,\Gfunc).
\end{equation*}
In the case that $\Omega$ is a strict flow on $\PP$, the above is an equality.

\end{theorem}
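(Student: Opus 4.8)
The plan is to unwind the definitions and show that an $\Omega_\e$-interleaving in the sense of Definition~\ref{definition:BubenikInterleaving} is, after pre-composition packaging, literally a weak $\e$-interleaving in the sense of Definition~\ref{definition:interleavings} for the flow $-\cdot\Omega$ on $\DD^\PP$. First I would record the dictionary: under the isomorphism of Lemma~\ref{lemma:SuperlinearIsLax} and the discussion following it, the $\e$-translation $\TT_\e$ on $\DD^\PP$ is $-\cdot\Omega_\e$, the coherence transformation $u$ is $-\cdot\eta_0$, and $\mu_{\e,\zeta}$ is $-\cdot\mu_{\e,\zeta}$ where $\mu_{\e,\zeta}$ is induced by $\Omega_\e\Omega_\zeta\leq\Omega_{\e+\zeta}$. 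Crucially, $\Ffunc\eta_\e = \Ffunc\Omega_{(0\le\e)}\circ\Ffunc\eta_0$, i.e.\ the natural transformation $\Ffunc\eta_\e$ in Diagram~\ref{eqn:BubenikInterleaving} equals the composite $\TT_{(0\le\e),\Ffunc}\circ u_\Ffunc$ appearing in Diagram~\ref{eq:interleaving}; this is exactly the factorization through $\eta_0$ proved inside Lemma~\ref{lemma:SuperlinearIsLax}.

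Given an $\Omega_\e$-interleaving $(\varphi,\psi)$ with $\varphi:\Ffunc\Rightarrow\Gfunc\Omega_\e$ and $\psi:\Gfunc\Rightarrow\Ffunc\Omega_\e$, I would take the very same $\varphi,\psi$ as morphisms $\Ffunc\to\TT_\e\Gfunc$ and $\Gfunc\to\TT_\e\Ffunc$ in $\DD^\PP$ and verify the two pentagons of Diagram~\ref{eq:interleaving}. Consider the left pentagon: its perimeter asserts $\mu_{\e,\e,\Ffunc}\circ\TT_\e[\psi]\circ\varphi = \TT_{(0\le2\e),\Ffunc}\circ u_\Ffunc$. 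Using the factorization above, the right-hand side is $\Ffunc\eta_{2\e}$, and since $\Omega_\e\Omega_\e\le\Omega_{2\e}$ the source $\mu$ lets us rewrite $\Ffunc\eta_{2\e}$ via $\Ffunc\eta_\e\Omega_\e$ followed by $\mu$; after cancelling the $\mu_{\e,\e}$ on both sides (legitimate since everything in sight is obtained by whiskering a single natural transformation of posets, which in a poset target is automatically compatible), the identity reduces to the upper triangle of Diagram~\ref{eqn:BubenikInterleaving}, namely $\psi\Omega_\e\circ\varphi = \Ffunc\eta_\e\Omega_\e$ — wait, more precisely it reduces to the commuting triangle $\Ffunc\xrightarrow{\varphi}\Gfunc\Omega_\e\xrightarrow{\psi\Omega_\e}\Ffunc\Omega_\e\Omega_\e$ equals $\Ffunc\xrightarrow{\Ffunc\eta_\e}\Ffunc\Omega_\e\xrightarrow{\Ffunc\eta_\e\Omega_\e}\Ffunc\Omega_\e\Omega_\e$, which is precisely one of the two triangles assumed to commute. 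The right pentagon is handled symmetrically, interchanging $\Ffunc$ with $\Gfunc$ and $\varphi$ with $\psi$. This shows $\Omega_\e$-interleaved $\Rightarrow$ weakly $\e$-interleaved, hence the infimum inequality $d_{(\DD^\PP,-\cdot\Omega)}(\Ffunc,\Gfunc)\le d_\Omega(\Ffunc,\Gfunc)$.

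For the equality in the strict case, I would argue the reverse implication: if $\Omega$ is strict then $u=\eta_0$ is the identity $I_\PP\Rightarrow\Omega_0=I_\PP$ and each $\mu_{\e,\zeta}$ is the identity $\Omega_\e\Omega_\zeta=\Omega_{\e+\zeta}$, so the induced flow $-\cdot\Omega$ on $\DD^\PP$ is also strict. Then in Diagram~\ref{eq:interleaving} the maps $u_a$, $\mu_{\e,\e}$, etc.\ are all identities, the two pentagons literally collapse to the two triangles of Diagram~\ref{eqn:BubenikInterleaving}, and the parallelograms of Diagram~\ref{eqn:BubenikInterleaving} commute automatically because $\DD^\PP$-valued naturality squares of $\Ffunc\eta_\e$ against $\psi$, etc., commute. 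Hence a weak $\e$-interleaving for $-\cdot\Omega$ is exactly an $\Omega_\e$-interleaving, the two sets of admissible $\e$ coincide, and the infima agree.

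The main obstacle I anticipate is bookkeeping the whiskering identities correctly: one must be careful that $\TT_\e[\psi]$ denotes post-whiskering the flow endofunctor $-\cdot\Omega_\e$ with the natural transformation $\psi$ (so $\TT_\e[\psi] = \psi\Omega_\e$ under the dictionary), that $\mu_{\e,\e,\Ffunc}$ is $\Ffunc$ pre-composed with the 2-cell $\Omega_\e\Omega_\e\le\Omega_{2\e}$, and that the cancellation of these $\mu$'s is valid — this works because all coherence cells here are of the form "whisker a natural transformation living entirely in the poset category $\PP$", and in a poset there is at most one 2-cell between any parallel pair, so every diagram of such cells commutes. Making that last point rigorous (rather than hand-wavy) is the only place requiring genuine care; everything else is a direct translation between the two diagrams.
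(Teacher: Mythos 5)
Your proposal is correct and takes essentially the same route as the paper: use the same $\varphi,\psi$ as the weak interleaving morphisms, reduce the pentagon of Diagram~\ref{eq:interleaving} to the triangle of Diagram~\ref{eqn:BubenikInterleaving} by exploiting the factorization $\eta_{2\e}=\mu_{\e,\e}\circ\eta_\e\Omega_\e\circ\eta_\e$ (valid because a poset has at most one $2$-cell between any parallel pair of functors), and note that in the strict case the coherence cells are identities so the pentagons collapse to the triangles while the parallelograms of Diagram~\ref{eqn:BubenikInterleaving} commute by interchange plus posetness. The paper makes the first part explicit via the large commutative diagram in its proof, whereas you reach the same conclusion more compactly by invoking the uniqueness of poset $2$-cells directly; those are the same argument at different levels of verbosity. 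One small caution: the phrase about ``cancelling $\mu_{\e,\e}$ on both sides'' is an unnecessary (and logically backwards) framing --- you are given the triangle and want the pentagon, so you simply post-compose both sides of the triangle with $\Ffunc\mu_{\e,\e}$ and identify the result with $\Ffunc\eta_{2\e}$; no cancellation, and hence no monicity of $\mu$, is required. With that phrasing tightened, the argument is complete.
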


\begin{proof}
Assume that $\Ffunc,\Gfunc$ are $\Omega_\e$-interleaved (Definition~\ref{definition:BubenikInterleaving}) and let $\eta_\e:I_\PP \Rightarrow \Omega_\e$ be the natural transformation coming from the superlinear family of translations.
Let $\TT=-\cdot\Omega$ be the induced flow on $\DD^{\PP}$ with coherence natural transformations $\hat u = -\cdot u$ and $\hat{\mu}_{\e,\zeta} = -\cdot \mu_{\e,\zeta}$.

Consider the following diagram.
\begin{equation}
\label{eq:BubenikProof}
\begin{tikzcd}
\Ffunc\Omega_{0}
\arrow[ddr, swap, Rightarrow, "{\Ffunc\Omega_{(0\leq\e)}}"']
\arrow[dddd, Rightarrow, "{\Ffunc\Omega_{(0\leq2\e)}}"']
\&
\&
\&
\Ffunc
\arrow[dd, Rightarrow, "{\Ffunc\eta_{\e}}"']
\arrow[ddrr, Rightarrow, swap, "{\varphi}"']
\arrow[lll, Rightarrow, "{\Ffunc u}"']
\\
\\
\&
\Ffunc\Omega_\e
\arrow[ddl, Rightarrow, swap,  "{\Ffunc\Omega_{(\e\leq2\e)}}"' near end]
\arrow[rr, equal]
\&
\&
\Ffunc\Omega_\e
\arrow[dd, Rightarrow, "{\Ffunc\eta_{\e}\Omega_\e}"']
\arrow[dl, Rightarrow, "{\Ffunc\eta_0\Omega_{\e}}"']
\&
\&
\Gfunc\Omega_\e
\arrow[ddll, Rightarrow, swap, "{\psi\Omega_\e}"']
\\
\&
\&
\Ffunc\Omega_0\Omega_\e
\arrow[ul, Rightarrow, swap, "{\Ffunc\mu_{0,\e}}"']
\arrow[dr, Rightarrow, "{\Ffunc\Omega_{(0\leq\e)}\Omega_\e}"']
\\
\Ffunc\Omega_{2\e}
\&
\&
\&
\Ffunc\Omega_{\e}\Omega_\e
\arrow[lll, Rightarrow, swap, "{\Ffunc\mu_{\e,\e}}"']
\end{tikzcd}
\end{equation}
The rightmost triangle commutes because $\phi$ and $\psi$ form a $\Omega_\e$-interleaving, while the rest of the cells commute by definition of a  flow (Definition~\ref{definition:catWithLaxAction}).
The perimeter of Diagram~\ref{eq:BubenikProof} gives the left half of Diagram~\ref{eq:interleaving}.
An analogous argument gives the other commuting pentagon;
thus $\Ffunc,\Gfunc$ are weakly $\e$-interleaved.

When the flow $\Omega$ on $\PP$ is strict, $\Ffunc \mu_{\e,\zeta}$ and $\Ffunc u$ are identities by definition.
Thus, a weak  $\e$-interleaving immediately induces an $\Omega_\e$-interleaving, and so the interleaving distances agree.
\end{proof}

\subsection{Interleavings on posets}
\label{Ssec:Hausdorff}
Rather than passing from a flow on a poset $\PP$ to a flow on $\DD^\PP$, we now look at the interleaving distance induced on $\PP$ itself.
Let $\PP$ be a poset together with a flow $\Omega$,
and let $d_{\PP}$ be the interleaving distance on $\PP$ induced by $\Omega$ (Definition~\ref{definition:interleavings}).
The extra structure of the poset category makes characterizing the interleaving distance rather simple, as seen in the following lemma.

\begin{lemma}
	\label{lemma:Poset}
Two objects $a,b \in \Obj\PP$ are $\e$-interleaved if and only if there exist morphisms $\varphi:a\to\Omega_{\e}b$ and $\psi:b\to\Omega_{\e}a$. So, the interleaving distance on $\PP$ induced by $\Omega$ is given by
	\begin{equation*}
	d_{\PP}(a,b)=\inf\{\e\geq0\mid \exists\; \varphi:a\to\Omega_{\e}b\text{ and }\psi:b\to\Omega_{\e}a\}.
	\end{equation*}
\end{lemma}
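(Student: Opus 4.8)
The plan is to observe that in a poset category, the commutativity conditions defining a weak $\e$-interleaving are vacuous, so that the mere existence of the two morphisms $\varphi:a\to\Omega_\e b$ and $\psi:b\to\Omega_\e a$ already furnishes an interleaving. The forward direction of the ``if and only if'' is trivial: a weak $\e$-interleaving $(\varphi,\psi)$ consists, in particular, of such a pair of morphisms. So the content is the converse.

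For the converse, suppose we are given morphisms $\varphi:a\to\Omega_\e b$ and $\psi:b\to\Omega_\e a$ in $\PP$. I claim $(\varphi,\psi)$ is a weak $\e$-interleaving. The point is that every diagram in a poset category commutes automatically: if $x\leq y$ in $\PP$, there is at most one morphism $x\to y$, so any two parallel composites of morphisms coincide. Each of the two pentagons in Diagram~\ref{eq:interleaving} is a diagram of morphisms in $\PP$ between fixed objects (its vertices are objects of $\PP$ obtained by applying the various translation endofunctors $\Omega_\bullet$ to $a$ or $b$), and both composite paths around a pentagon are morphisms with the same source and target. Hence they are equal, and the pentagons commute. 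Therefore $(\varphi,\psi)$ satisfies Definition~\ref{definition:interleavings}, so $a,b$ are weakly $\e$-interleaved. This establishes the equivalence, and the displayed formula for $d_\PP$ then follows directly by substituting this simpler criterion into the definition $d_\PP(a,b)=\inf\{\e\geq 0\mid a,b\text{ weakly }\e\text{-interleaved}\}$.

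One small subtlety worth spelling out: to run the ``poset $\Rightarrow$ diagram commutes'' argument, I should check that the two paths around each pentagon really are parallel, i.e.\ that they have equal source and equal target as morphisms in $\PP$. This is immediate from reading off the diagram — e.g.\ in the left pentagon both paths go from $\TT_0 a$ to $\TT_{2\e}a$ — but it is the only thing that needs verification, and it is the single place where one uses the specific shape of Diagram~\ref{eq:interleaving} rather than abstract nonsense about posets. I do not anticipate any genuine obstacle; the ``hard'' part, such as it is, is simply noticing that there is nothing to prove beyond thin-category commutativity. The formula for $d_\PP$ is then a restatement.
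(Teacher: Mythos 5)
Your proposal is correct and follows essentially the same argument as the paper: both reduce to the observation that $\PP$ is a thin category, so every diagram in it commutes, and hence the mere existence of $\varphi$ and $\psi$ suffices. The paper states this more tersely, but the underlying reasoning is identical.
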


\begin{proof}
Let $a$ and $b$ be two objects in $\PP$ and let morphisms $\varphi:a\to\Omega_{\e}b$ and $\psi:b\to\Omega_{\e}a$ be given.
Then all morphisms of Diagram~\ref{eq:interleaving} exist and, because $\PP$ is a poset, the diagram must commute.
Thus any pair of morphisms $\phi$ and $\psi$ gives rise to an $\e$-interleaving and the lemma follows.
\end{proof}

We now show how to realize the Hausdorff distance on subsets of a metric space and the $L^{\infty}$-distance on $\R^{n}$ as interleaving distances on poset categories with flows.

\subsubsection{The Hausdorff Distance}
Fix a metric space $(\mathbb{X},d)$.
Let $S(\mathbb{X})$ be the poset category consisting of all nonempty subsets of $\mathbb{X}$ with poset given by inclusion.
Define  $A_{\e}=\cup_{a\in A}\{x\in\X\mid d(a,x)\leq\e\}$.
The \textbf{Hausdorff distance} is an extended pseudometric on $S(\X)$ given by
\begin{equation*}
	d_{H}(A,B)=\inf\{\e\geq0 \mid A\subset B_{\e}\text{ and }B\subset A_{\e}\}.
\end{equation*}

We  define a flow $\Omega$ on $S(\mathbb{X})$ as follows.
For each $\e\geq0$, define the $\e$-translation $\Omega_{\e}$ on $S(\mathbb{X})$  by  $\Omega_{\e}(A):=A_{\e}$ and  $\Omega_{\e}[A\subseteq B]$ the induced inclusion $A_{\e}\subseteq B_{\e}$.
Define the coherence natural transformations $u:I_{\PP}\Rightarrow\Omega_{0}$
and
$\mu_{\e,\zeta}:\Omega_{\e}\Omega_{\zeta}\Rightarrow\Omega_{\e+\zeta}$, $\e,\zeta\geq0$,
to be the obvious families of inclusions $u_{A}:A\subseteq A_{0}$ and $\mu_{\e,\zeta,A}:(A_{\e})_{\zeta}\subseteq A_{\e+\zeta}$, $A\subset\X$.
Naturality follows from the poset structure of $S(\X)$.
Again by the poset structure of $S(\X)$, it is easy to check that
$\Omega=(\Omega, u, \mu)$ is a flow on $S(\X)$.

\begin{theorem}
	The interleaving distance on $S(\mathbb{X})$ induced by the flow $\Omega$, coincides with the Hausdorff distance on $S(\mathbb{X})$.
Specifically, for any $A,B \in S(\X)$,
\begin{equation*}
d_{S(\X)}(A,B) = d_H(A,B)
\end{equation*}

\end{theorem}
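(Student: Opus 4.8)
The plan is to reduce the claim to Lemma~\ref{lemma:Poset} and then check that the resulting combinatorial condition is literally the definition of the Hausdorff distance. Since $S(\X)$ is a poset, Lemma~\ref{lemma:Poset} tells us that $A$ and $B$ are weakly $\e$-interleaved if and only if there exist morphisms $\varphi : A \to \Omega_\e B$ and $\psi : B \to \Omega_\e A$ in $S(\X)$; and a morphism in $S(\X)$ exists precisely when the inclusion holds. By the definition of the flow $\Omega$ on $S(\X)$ we have $\Omega_\e B = B_\e$ and $\Omega_\e A = A_\e$, so a weak $\e$-interleaving of $A$ and $B$ exists if and only if $A \subseteq B_\e$ and $B \subseteq A_\e$.

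Taking infima over all such $\e \geq 0$, Definition~\ref{definition:interleavings} gives
\[
d_{S(\X)}(A,B) = \inf\{\e \geq 0 \mid A \subseteq B_\e \text{ and } B \subseteq A_\e\},
\]
which is exactly the stated formula for $d_H(A,B)$. Hence $d_{S(\X)}(A,B) = d_H(A,B)$ for all $A,B \in S(\X)$.

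The only points requiring a little care — and the place where the "main obstacle" sits, though it is mild — are the sanity checks that $\Omega$ really is a flow and that the thickening operation behaves as expected. First, one should confirm $\Omega_\e$ is a well-defined functor: it sends nonempty subsets to nonempty subsets (since $A \subseteq A_\e$), and monotonicity $A \subseteq B \Rightarrow A_\e \subseteq B_\e$ is immediate from the definition $A_\e = \bigcup_{a \in A}\{x : d(a,x) \leq \e\}$. Second, one needs $I_{S(\X)} \leq \Omega_0$, i.e. $A \subseteq A_0$, which holds because $d(a,a) = 0 \leq 0$; and $\Omega_\e \Omega_\zeta \leq \Omega_{\e+\zeta}$, i.e. $(A_\e)_\zeta \subseteq A_{\e+\zeta}$, which is the triangle inequality in $(\X,d)$: if $d(a,y) \leq \e$ and $d(y,x) \leq \zeta$ then $d(a,x) \leq \e + \zeta$. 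All naturality and coherence diagrams commute automatically because $S(\X)$ is a poset (any diagram of inclusions commutes). In fact this last observation means most of the "verification" is vacuous, so the proof is genuinely just the two-line unwinding above once Lemma~\ref{lemma:Poset} is invoked; the bulk of the prose should go into making the identification of the index set $\{\e : A \subseteq B_\e,\ B \subseteq A_\e\}$ with the one in the definition of $d_H$ explicit.
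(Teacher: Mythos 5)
Your argument is exactly the paper's: the paper's proof is simply ``Clear, by definition of the Hausdorff distance and Lemma~\ref{lemma:Poset},'' and your unwinding makes that ``clear'' explicit in the same way. The extra sanity checks that $\Omega$ is a flow were already handled in the paper's setup paragraph, so they are not part of the proof proper, but including them does no harm.
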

\begin{proof}
	Clear, by definition of the Hausdorff distance and Lemma~\ref{lemma:Poset}.
\end{proof}

\subsubsection{$L^\infty$-distance on $\R^n$}
Let $\R^{n}$ be the set of all $n$-tuples of real numbers.
The $L^{\infty}$-norm on $\R^n$ is defined as follows.
Let $a=(a_1,\ldots,a_n)$ and $b=(b_1,\ldots,b_n)$ be two $n$-tuples in $\R^n$. Then define
\begin{equation*}
\|a-b\|_{\infty}=\max\{|a_i-b_i| : i=1,\ldots,n\}
\end{equation*}
We now realize this metric as an interleaving distance.
Consider $\R^n$ as the poset $(\R^{n},\leq)$ where  $a \leq b$ when $a_i\leq b_i$ for all $i=1,\ldots,n$. Define a strict flow $\Omega$ on $(\R^{n},\leq)$ as follows.
Let $\e\geq0$ and, for ease of notation, let $a + \e = (a_1+\e, \cdots, a_n + \e)$.
Define the $\e$-translation $\Omega_{\e}:(\R^{n},\leq)\to(\R^{n},\leq)$ by $\Omega_{\e}(a)=a+\e$ and $\Omega_\e[a\leq b] = (a + \e \leq b+\e)$.

Let $\e,\zeta\geq0$ and define $\Omega_{(\e\leq\zeta)} (a)= (a + \e \leq a + \zeta)$.
Clearly this collection forms a natural transformation $\Omega_{(\e\leq\zeta)}:\Omega_{\e}\Rightarrow\Omega_{\zeta}$.
We easily check that
\begin{equation*}
	\Omega:[0,\infty)\to\End(\R^{n},\leq), \,\e\mapsto\Omega_{\e}
\end{equation*}
forms a strict $[0,\infty)$-monoidal functor, i.e. a strict flow on $(\R^{n},\leq)$.
Denote the associated interleaving distance by $d_{(\R^{n},\leq)}$.
\begin{theorem}
	The interleaving distance on $\R^{n}$ induced by the strict flow $\Omega$, coincides with the $L^{\infty}$-distance on $\R^{n}$.
	That is, for any $a,b \in \R^n$,
	\begin{equation*}
	d_{(\R^{n},\leq)} (a,b) = \|a-b\|_\infty.
	\end{equation*}

\end{theorem}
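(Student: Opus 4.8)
The plan is to invoke Lemma~\ref{lemma:Poset}, which applies since $(\R^n,\leq)$ is a poset. By that lemma, $a$ and $b$ are $\e$-interleaved with respect to $\Omega$ if and only if there exist morphisms $\varphi:a\to\Omega_\e b$ and $\psi:b\to\Omega_\e a$ in $(\R^n,\leq)$; since the hom-sets of a poset category have at most one element, this is simply the condition that $a\leq b+\e$ and $b\leq a+\e$. So the first step is to unwind these two order relations coordinatewise.

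Next I would translate: $a\leq b+\e$ says $a_i\leq b_i+\e$, i.e.\ $a_i-b_i\leq\e$, for all $i=1,\dots,n$; symmetrically, $b\leq a+\e$ says $b_i-a_i\leq\e$ for all $i$. Taking the two together is exactly the statement that $|a_i-b_i|\leq\e$ for every $i$, i.e.\ $\max_i|a_i-b_i|=\|a-b\|_\infty\leq\e$. Hence $a$ and $b$ are $\e$-interleaved if and only if $\e\geq\|a-b\|_\infty$.

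The final step is to take the infimum: by Definition~\ref{definition:interleavings} (as specialized in Lemma~\ref{lemma:Poset}),
\begin{equation*}
d_{(\R^n,\leq)}(a,b)=\inf\{\e\geq0\mid \|a-b\|_\infty\leq\e\}=\|a-b\|_\infty,
\end{equation*}
where the last equality holds because the set on the right is the closed ray $[\|a-b\|_\infty,\infty)$, whose infimum is attained. There is no real obstacle here; the only points requiring a moment of care are getting the direction of the poset translation right (so that $\varphi$ and $\psi$ impose the two one-sided inequalities that assemble into the absolute value) and observing that the infimum defining $d_{(\R^n,\leq)}$ is in fact a minimum, so no limiting argument is needed.
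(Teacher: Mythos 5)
Your proof is correct and matches the paper's own argument essentially step for step: both invoke Lemma~\ref{lemma:Poset} to reduce to the existence of morphisms $a\to\Omega_\e b$ and $b\to\Omega_\e a$, unwind these coordinatewise to $|a_i-b_i|\leq\e$, and take the infimum to obtain $\|a-b\|_\infty$.
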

\begin{proof}
	Let $a,b \in \R^n$ be two $n$-tuples.
	By Lemma~\ref{lemma:Poset},  we have
	\begin{align*}
		d_{(\R^{n},\leq)}(a,b)&=\inf\{\e\geq0\mid a\leq b+\e\text{ and }b\leq a+\e\}\\
		&=\inf\{\e\geq0\mid b_i-\e\leq a_i\leq b_i+\e\text{ for all }i=1,\ldots,n\} \\
		&=\inf\{\e\geq0\mid \text{ }|a_i-b_i|\leq\e\text{ for all }i=1,\ldots,n\} \\
		&=\max\{|a_i-b_i| : 
		i=1,\ldots,n\}\\
		&=\|a-b\|_{\infty}
	\end{align*}
as claimed.
\end{proof}

\subsection{Interleavings on slice categories}
Let $\CC$ be a category and let $c$ be an object in $\CC$.
With this data we can construct a category denoted $(\CC\downarrow c)$ called a \textbf{slice category}.
The objects in $(\CC\downarrow c)$ are tuples, $(a,f)$, where $a$ is an object in $\CC$ and $f\in\Hom_{\CC}(a,c)$.
The morphisms $\varphi:(a,f)\to(b,g)$ in $(\CC\downarrow c)$ are morphisms $\varphi:a\to b$ in $\CC$ such that $g\circ\varphi=f$.
Now we define the $L^{\infty}$-distance on the slice category $(\Top\downarrow\R)$ and then generalize to $(\Top\downarrow\M)$ for an arbitrary metric space $\M$ and realize them as examples of the interleaving distance.

\subsubsection{The $L^{\infty}$-distance on $(\Top\downarrow\R)$}
\label{Ssec:R-spaces}

Given $f:\X\to\R$ and $g:\X\to\R$ define their $L^{\infty}$-distance as
\begin{equation*}
||f-g||_{\infty}=\sup_{x\in\mathbb{X}} |f(x)-g(x)|.
\end{equation*}
Note that this definition requires the same domain for $f$ and $g$.
We can extend the definition of the $L^{\infty}$-distance to arbitrary $\R$-valued functions.
Consider the slice category $(\Top\downarrow\R)$, whose objects are pairs $(\X,f)$ consisting of a topological space $\X$ and an $\R$-valued function $f$ on $\X$, called \textbf{$\R$-spaces}.
A morphism $\phi:(\X,f)\to(\Y,g)$ in $(\Top\downarrow\R)$ is a continuous map $\phi:\X\to\Y$ such that $\phi\circ g=f$, called a \textbf{function preserving map}.

\begin{definition}
For general $\R$-valued functions $f:\X\to\R$ and $g:\Y\to\R$, the $L^{\infty}$-distance is defined to be
$$d_{\infty}((\X,f),(\Y,g))=\inf_{\Phi:\X\to\Y}||f-g\circ\Phi||_{\infty}$$
where $\Phi$ runs over all homeomorphisms.
If $\X$ and $\Y$ are not homeomorphic, we set  $d_{\infty}(f,g)=\infty$.
\end{definition}
It is immediate that for $f$ and $g$ defined on the same domain, $d_\infty(f,g) \leq \|f-g\|_\infty$.

We now realize $d_\infty$ as an interleaving distance on $(\Top\downarrow\R)$ by defining a flow $\TT$ on $(\Top\downarrow\R)$ as follows.
\begin{itemize}
\item
Let $\e\geq0$.
For $(\X,f) \in \Obj(\Top \downarrow \R)$, $\TT_{\e}(\X,f)=(\X_{\e},f_{\e})$ where
$\X_{\e}=\X\times[-\e,\e]$ and $f_{\e}(x,t)=f(x)+t$.
\item
For a morphism $\phi:(\X,f)\to(\Y,g)$, let $\TT_{\e}[\phi]:(\X_{\e},f_{\e})\to(\Y_{\e},g_{\e})$, $(x,t)\mapsto (\phi(x),t)$.
\item
Let $0\leq\e\leq\zeta$.
Define $\TT_{(\e\leq\zeta)}:\TT_{\e}\Rightarrow\TT_{\zeta}$ to be the natural transformation $(\X_{\e},f_{\e})\to (\X_{\zeta},f_{\zeta})$, $(x,t)\mapsto (x,t)$.
\item
For $(\X,f) \in \Obj(\Top \downarrow \R)$, take
 $u_f:(\X,f)\to(\X_{0},f_{0})$, $x\mapsto(x,0)$.
\item
Let $\e,\zeta\geq0$.
For $(\X,f) \in \Obj(\Top \downarrow \R)$, consider
$\mu_{\e,\zeta,f}:((\X_{\zeta})_{\e},(f_{\zeta})_{\e})\to(\X_{\e+\zeta},f_{\e+\zeta})$, $((x,t),s)\mapsto(x,t+s)$.
\end{itemize}
It is largely bookkeeping to check that $\TT=(\TT,u,\mu)$ is a flow on $(\Top\downarrow\R)$.

\begin{theorem}
\label{theorem:LInftyForSlice}
The interleaving distance $d_{((\Top\downarrow\R),\TT)}$ coincides with the distance $d_{\infty}$.
That is, given $(\X,f), (\Y,g)$ in $(\Top \downarrow \R)$,
\begin{equation*}
d_{((\Top\downarrow\R),\TT)}((\X,f), (\Y,g)) = d_{\infty}((\X,f), (\Y,g)).
\end{equation*}

\end{theorem}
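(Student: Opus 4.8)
The plan is to prove the two inequalities $d_{((\Top\downarrow\R),\TT)} \leq d_\infty$ and $d_\infty \leq d_{((\Top\downarrow\R),\TT)}$ separately, in both cases by translating an $\e$-interleaving into a homeomorphism with controlled $L^\infty$-error and vice versa. The basic intuition is that a weak $\e$-interleaving between $(\X,f)$ and $(\Y,g)$ consists of maps $\varphi:(\X,f)\to(\Y_\e,g_\e)$ and $\psi:(\Y,g)\to(\X_\e,f_\e)$, and since the $\e$-thickened space $\Y_\e = \Y\times[-\e,\e]$ deformation retracts to $\Y$, these maps morally give a homeomorphism $\X\to\Y$ together with the bound that $f$ and $g$ differ by at most $\e$ along it.

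\medskip

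First I would prove $d_{((\Top\downarrow\R),\TT)} \leq d_\infty$. Suppose $\Phi:\X\to\Y$ is a homeomorphism with $\|f - g\circ\Phi\|_\infty \leq \e$. Define $\varphi:(\X,f)\to(\Y_\e,g_\e)$ by $x\mapsto(\Phi(x), f(x)-g(\Phi(x)))$; this lands in $\Y_\e$ because $|f(x)-g(\Phi(x))|\leq\e$, and it is function-preserving since $g_\e(\Phi(x),f(x)-g(\Phi(x))) = g(\Phi(x)) + (f(x)-g(\Phi(x))) = f(x)$. Symmetrically define $\psi:(\Y,g)\to(\X_\e,f_\e)$ using $\Phi^{-1}$. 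One then checks the two pentagons of Diagram~\ref{eq:interleaving} commute: tracing $x\in\X$ through $\TT_\e[\psi]\circ\varphi$ lands in $\TT_\e\TT_\e(\X,f) = \X\times[-\e,\e]\times[-\e,\e]$, and applying $\mu_{\e,\e}$ (which adds the last two coordinates) gives $(x, (f(x)-g(\Phi(x))) + (g(\Phi(x))-f(x))) = (x,0)$, which agrees with $\TT_{(0\leq 2\e)}\circ u_f$. So $(\X,f)$ and $(\Y,g)$ are weakly $\e$-interleaved, giving the inequality.

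\medskip

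Next I would prove the reverse inequality $d_\infty \leq d_{((\Top\downarrow\R),\TT)}$. Suppose $(\varphi,\psi)$ is a weak $\e$-interleaving. Write $\varphi(x) = (\alpha(x), s(x))$ with $\alpha:\X\to\Y$ continuous and $s:\X\to[-\e,\e]$, and similarly $\psi(y) = (\beta(y), t(y))$. The commuting pentagon forces $\beta(\alpha(x)) = x$ for all $x$ (reading off the first coordinate of $\mu_{\e,\e}\circ\TT_\e[\psi]\circ\varphi = \TT_{(0\leq2\e)}\circ u_f$), and symmetrically $\alpha(\beta(y))=y$; hence $\alpha = \Phi$ is a homeomorphism with inverse $\beta$. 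The function-preserving condition on $\varphi$ gives $g_\e(\alpha(x),s(x)) = f(x)$, i.e.\ $f(x) = g(\Phi(x)) + s(x)$, so $|f(x) - g(\Phi(x))| = |s(x)| \leq \e$, whence $\|f - g\circ\Phi\|_\infty \leq \e$ and $d_\infty((\X,f),(\Y,g)) \leq \e$. Taking infima over $\e$ in both directions yields the claimed equality.

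\medskip

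The main obstacle I anticipate is purely bookkeeping rather than conceptual: carefully verifying that the maps $\varphi,\psi$ constructed in the first direction are genuinely continuous (the coordinate $x\mapsto f(x)-g(\Phi(x))$ is continuous as a composite of continuous maps, so this is routine) and, more tediously, that \emph{both} pentagons of Diagram~\ref{eq:interleaving} commute — not just the one I spelled out. The symmetry of the construction makes the second pentagon follow from the same computation with the roles of $(\X,f)$ and $(\Y,g)$ swapped, but one should be careful that the explicit formulas for $u$, $\mu_{\e,\e}$, and $\TT_{(0\leq 2\e)}$ given in the bulleted definition of the flow are being used consistently. A minor subtlety worth flagging is the edge case $\e = 0$: there $\TT_0(\X,f) = (\X\times\{0\}, f)$, which is homeomorphic but not literally equal to $(\X,f)$, so the argument naturally produces the infimum rather than an attained minimum — consistent with the statement, which only claims equality of the two infima.
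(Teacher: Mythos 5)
Your proposal is correct and follows essentially the same approach as the paper's proof: in both directions you project $\varphi$ and $\psi$ to the first coordinate, use the function-preserving condition to pin down the second coordinate as $f(x)-g(\Phi(x))$, and chase the pentagons (through $\mu_{\e,\e}$, which adds the last two coordinates) to see that the projections are mutually inverse homeomorphisms. The only cosmetic difference is that you phrase the result as two inequalities while the paper states it as an iff characterization of weak $\e$-interleaving, but the underlying computations are identical.
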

\begin{proof}
Let $\e\geq0$.
It suffices to show that $(\mathbb{X},f)$ and $(\mathbb{Y},g)$ are weakly $\e$-interleaved if and only if for some homeomorphism $\Phi:\mathbb{X}\to\mathbb{Y}$, $|f(x)-g(\Phi(x))|\leq\e$ for all $x\in\mathbb{X}$.

Suppose that  $(\mathbb{X},f)$ and $(\mathbb{Y},g)$ are weakly $\e$-interleaved via a pair of morphisms $\varphi:(\mathbb{X},f)\to\TT_{\e}(\mathbb{Y},g)$ and  $\psi:(\mathbb{Y},g)\to\TT_{\e}(\mathbb{X},f)$.
Let $\Phi=p_{1}\circ\varphi$ and $\Psi=p_{1}\circ\psi$ be the projections of $\varphi$ and $\psi$ to the first coordinate. Since the morphisms $\varphi:(\mathbb{X},f)\to\TT_{\e}(\mathbb{Y},g)$ and  $\psi:(\mathbb{Y},g)\to\TT_{\e}(\mathbb{X},f)$ are function preserving maps, we can write
\begin{align*}
\varphi(x)&=\big(\Phi(x),f(x)-g\Phi(x)\big) \\
\psi(y) & =\big(\Psi(y),g(y)-f\Psi(y)\big).
\end{align*}
As $(\varphi,\psi)$ is a weak $\e$-interleaving, we can chase an element around the left pentagon of the commutative diagram of Equation~\ref{eq:interleaving} to see that
\begin{equation*}
(x,0) = (\Psi\Phi(x),f(x) - f(\Psi\Phi(x))).
\end{equation*}
So, $\Psi\Phi = I_\X$ and a similar argument gives that $\Phi\Psi=I_\Y$.
Therefore, $\Phi$ is a homeomorphism.
Because $\varphi:(\mathbb{X},f)\to\TT_{\e}(\mathbb{Y},g)$ is function preserving map, the map $\Phi$ further satisfies $|f(x)-g(\Phi(x))|\leq\e$ for all $x\in\mathbb{X}$.

Now, assume that there exists a homeomorphism $\Phi:\X \to \Y$ such that $|f(x)-g(\Phi(x))|\leq\e$ for all $x\in\mathbb{X}$ and define $\Psi = \Phi \inv$.
Define the morphisms $\varphi:(\X,f) \to (\Y_\e,g_\e)$ and $\psi:(\Y,g) \to (\X_\e,f_\e)$ by the formulas
\begin{align*}
\varphi(x)&=\big(\Phi(x),f(x)-g\Phi(x)\big) \text{ and}\\
\psi(y)&=\big(\Psi(y),g(y)-f\Psi(y)\big).
\end{align*}
It is easy to check that these are function preserving maps.
Again, diagram chasing shows that the diagram of Equation~\ref{eq:interleaving} commutes, so $\phi$ and $\psi$ form a weak $\e$-interleaving of $(\mathbb{X},f)$ and $(\mathbb{Y},g)$.
\end{proof}
This example shows that one needs to work the coherence natural transformations in Definition~\ref{definition:interleavings} to define weak $\e$-interleavings rather than working with a strict analogue of Definition~\ref{definition:BubenikInterleaving}.
That is to say, a definition which checks if the pentagons in Diagram~\ref{eq:interleaving} commute rather than just triangles as in Equation~\ref{definition:BubenikInterleaving}.
With the choice of category of $\R$-spaces as a slice category, Theorem~\ref{theorem:LInftyForSlice} cannot be proven considering the condition on the interleaving relation of Equation~\ref{eq:interleaving} that triangles commute for the definition of the $\e$-interleaving relation because only after composing with the coherence natural transformations $\mu_{\e,\e}$, can the point
\begin{equation*}
(\Psi\Phi(x),f(x)-g(\Phi(x)),g(\Phi(x))-f(\Psi\Phi(x)))
\end{equation*}
be identified with $(x,0)$.

In \cite[Remark~5.1]{Lesnick2015}, the author discusses a similar result to Theorem~\ref{theorem:LInftyForSlice}.
However, he considers the category of $\R$-spaces with a larger collection of morphisms than those in the slice category.
This relaxation in the category means the flow is strict, and thus the interleaving relation consists of triangles rather than pentagons.
Working with these slice categories as defined has the added benefit that
it is easier to define both the sublevel and level set  filtration functors, which in turn, provides us with the stability of both sublevel set and level set persistent homology as will be discussed in Section~\ref{ssec:persistence-stability}.

\subsubsection{The $L^{\infty}$-distance on $(\Top\downarrow\M)$}
\label{Ssec:M-spaces}
We now extend the $d_\infty$ distance to arbitrary metric spaces.
Fix a metric space $(\M,d)$.
\begin{definition}
	For general $\M$-valued functions $f:\X\to\M$ and $g:\Y\to\M$, the $L^{\infty}$-distance is defined to be
	$$d_{\infty}(f,g)=\inf_{\Phi:\mathbb{X}\to\mathbb{Y}}\sup_{x\in\X}d(f(x),g(\Phi(x)))$$
	where $\Phi$ runs over all homeomorphisms.
	If $\X$ and $\Y$ are not homeomorphic, we set  $d_{\infty}(f,g)=\infty$.
\end{definition}
We now show how $d_\infty$ can be realized as an interleaving distance on $(\Top\downarrow\M)$.
We define a  flow $\widehat{\TT}$ on $(\Top\downarrow\M)$, which is very similar to the flow $\TT$ as defined in Section~\ref{Ssec:R-spaces}, as follows.
\begin{itemize}
\item
For $(\mathbb{X},f) \in \Obj(\Top \downarrow \M)$, $\widehat{\TT}_{\e}(\mathbb{X},f)=(\QQ_{\e}(\mathbb{X},f),p_2)$ where
$\QQ_{\e}(\mathbb{X},f):=\{(x,m)\in\X\times\M\mid d(f(x),m)\leq\e\}$ and $p_2$ is the projection to the second coordinate.
\item
For a morphism $\phi:(\X,f)\to(\Y,g)$, let $\widehat{\TT}_{\e}[\phi]:(\QQ_{\e}(\X,f),p_2)\to(\QQ_{\e}(\Y,g),p_2)$, $(x,m)\mapsto (\phi(x),m)$.
\item
Let $0\leq\e\leq\zeta$.
Define $\widehat{\TT}_{(\e\leq\zeta)}:\widehat{\TT}_{\e}\Rightarrow\widehat{\TT}_{\zeta}$ to be the natural transformation $(\QQ_{\e}(\X,f),p_2)\to (\QQ_{\zeta}(\X,f),p_2)$, $(x,m)\mapsto (x,m)$.
\item
For $(\X,f) \in \Obj(\Top \downarrow \M)$, take
$\widehat{u}_f:(\X,f)\to(\QQ_{0}(\X,f),p_{2})$, $x\mapsto(x,f(x))$.
\item
Let $\e,\zeta\geq0$.
For $(\X,f) \in \Obj(\Top \downarrow \M)$, consider
$\widehat{\mu}_{\e,\zeta,f}:(\QQ_{\e}(\QQ_{\zeta}(\X,f),p_2),p_2)\to(\QQ_{\e+\zeta}(\mathbb{X},f),p_2)$, $((x,t),s)\mapsto(x,s)$.
\end{itemize}
Again it is a formality to check that $(\widehat{\TT},\widehat{u},\widehat{\mu})$ is a flow on $(\Top\downarrow\M)$.

\begin{theorem}
\label{theorem:SliceGeneral}
If $(\M,d)=(\R,||\cdot||_{\infty})$, then
	\begin{equation*}
		((\Top\downarrow\R),\widehat{\TT})\cong((\Top\downarrow\R),\TT).
	\end{equation*}
	That is, for every $\e\geq0$, $\widehat{\TT}_{\e}$ is naturally isomorphic to $\TT_\e$ as defined in Section~\ref{Ssec:R-spaces}.
	\end{theorem}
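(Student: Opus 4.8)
The plan is to write down an explicit comparison isomorphism and then check, coordinate by coordinate, that it is invertible, natural, and compatible with all the coherence data of the two flows. For each $\e\ge 0$ and each object $(\X,f)$ of $(\Top\downarrow\R)$ I would define
\[
\alpha_{\e,(\X,f)}\colon \TT_\e(\X,f)=(\X\times[-\e,\e],\,f_\e)\;\longrightarrow\;\widehat\TT_\e(\X,f)=(\QQ_\e(\X,f),\,p_2),
\]
given by $(x,t)\mapsto\bigl(x,\,f(x)+t\bigr)$. Since $|f(x)-(f(x)+t)|=|t|\le\e$, the image does lie in $\QQ_\e(\X,f)$, and $p_2(x,f(x)+t)=f(x)+t=f_\e(x,t)$, so $\alpha_{\e,(\X,f)}$ is a function-preserving map. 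Its two-sided inverse is $(x,m)\mapsto(x,\,m-f(x))$, well defined because $|m-f(x)|\le\e$ on $\QQ_\e(\X,f)$; both maps are continuous because $f$ is continuous, so $\alpha_{\e,(\X,f)}$ is an isomorphism in $(\Top\downarrow\R)$.

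Next I would verify naturality in the object. Given a function-preserving map $\phi\colon(\X,f)\to(\Y,g)$, so that $g\circ\phi=f$, chasing $(x,t)$ around the naturality square for $\alpha_\e$ yields $(\phi(x),\,f(x)+t)$ along one route and $(\phi(x),\,g(\phi(x))+t)$ along the other, and these coincide precisely because $g\circ\phi=f$. Hence each $\alpha_\e$ is a natural isomorphism $\TT_\e\Rightarrow\widehat\TT_\e$, which already establishes the stated assertion of the theorem.

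To upgrade this to an isomorphism of categories with a flow, I would check that the family $\{\alpha_\e\}_{\e\ge0}$ is compatible with the remaining structure maps. For the monotonicity transformations, $\widehat\TT_{(\e\le\zeta)}\circ\alpha_\e=\alpha_\zeta\circ\TT_{(\e\le\zeta)}$, since both composites send $(x,t)\mapsto(x,f(x)+t)$; for the units, $\alpha_0\circ u=\widehat u$, since $x\mapsto(x,0)\mapsto(x,f(x))$; and for the multiplications, $\alpha_{\e+\zeta}\circ\mu_{\e,\zeta}=\widehat\mu_{\e,\zeta}\circ(\alpha_\e\ast\alpha_\zeta)$, where $\alpha_\e\ast\alpha_\zeta\colon\TT_\e\TT_\zeta\Rightarrow\widehat\TT_\e\widehat\TT_\zeta$ is the Godement composite: chasing $((x,t),s)$ gives $(x,\,f(x)+t+s)$ on both sides.

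None of these steps presents a genuine obstacle; the work is bookkeeping with coordinates. The points that need a little care are (a) confirming that $\alpha_{\e,(\X,f)}$ and its inverse really are continuous, function-preserving maps of spaces, and that the image lands in $\QQ_\e(\X,f)$; and (b) writing the Godement composite $\alpha_\e\ast\alpha_\zeta$ and the associated coherence square correctly, so that its identification with $\mu_{\e,\zeta}$ and $\widehat\mu_{\e,\zeta}$ is unambiguous. Once the formula $(x,t)\mapsto(x,f(x)+t)$ is in hand, everything else follows by tracking coordinates.
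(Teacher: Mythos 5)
Your construction coincides with the paper's: the same object-wise maps $(x,t)\mapsto(x,f(x)+t)$ and $(x,m)\mapsto(x,m-f(x))$ are written down, and you verify they are mutually inverse function-preserving homeomorphisms. The paper leaves naturality and compatibility with the coherence data as an ``easy to check'' step, whereas you carry those checks out explicitly, but the approach is essentially identical.
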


\begin{proof}
Let $\e\geq0$ and let $(\X,f)$ be an $\M$-space.
We claim that $\widehat{\TT}_{\e}(\X,f)\cong\TT_{\e}(\X,f)$.
Consider the function preserving maps $(\QQ_{\e}(\X,f),p_2)\to(\X_\e,f_\e)$, $(x,m)\mapsto (x,m-f(x))$ and $(\X_\e,f_\e)\to(\QQ_{\e}(\X,f),p_2)$, $(x,t)\mapsto (x,t+f(x))$.
Then it is easy to check that they are inverses, thus we get an isomorphism as desired.
	\end{proof}

\begin{theorem}
The interleaving distance $d_{((\Top\downarrow\M),\widehat{\TT})}$ coincides with the distance $d_{\infty}$.
That is, given $(\X,f), (\Y,g)$ in $(\Top \downarrow \M)$,
\begin{equation*}
	d_{((\Top\downarrow\M),\widehat{\TT})}((\X,f), (\Y,g)) = d_\infty((\X,f), (\Y,g)).
\end{equation*}
\end{theorem}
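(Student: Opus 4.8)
The plan is to run exactly the argument of Theorem~\ref{theorem:LInftyForSlice}, with the interval thickening $\X_\e = \X\times[-\e,\e]$ replaced by $\QQ_\e(\X,f)$ and the absolute value $|f(x)-g(\Phi(x))|$ replaced by the metric $d(f(x),g(\Phi(x)))$. As there, the whole theorem reduces to the following claim: for each $\e\ge0$, the $\M$-spaces $(\X,f)$ and $(\Y,g)$ are weakly $\e$-interleaved with respect to $\widehat\TT$ if and only if there is a homeomorphism $\Phi:\X\to\Y$ with $d(f(x),g(\Phi(x)))\le\e$ for all $x\in\X$. Granting this, the theorem follows by passing to infima: for a fixed homeomorphism $\Phi$ the least $\e$ with $d(f(x),g(\Phi(x)))\le\e$ for all $x$ is $\sup_{x\in\X} d(f(x),g(\Phi(x)))$, so taking the infimum over both $\e$ and $\Phi$ reproduces $d_\infty((\X,f),(\Y,g))$; and if $\X\not\cong\Y$ both sides are $\infty$ by definition.

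For the forward direction, suppose $(\varphi,\psi)$ is a weak $\e$-interleaving, with $\varphi:(\X,f)\to\widehat\TT_\e(\Y,g)=(\QQ_\e(\Y,g),p_2)$ and $\psi:(\Y,g)\to\widehat\TT_\e(\X,f)=(\QQ_\e(\X,f),p_2)$. Since $\varphi$ and $\psi$ are function preserving, their second components are forced by $p_2$, so $\varphi(x)=(\Phi(x),f(x))$ and $\psi(y)=(\Psi(y),g(y))$ for continuous maps $\Phi:\X\to\Y$ and $\Psi:\Y\to\X$, and membership in the relevant $\QQ_\e$ gives $d(f(x),g(\Phi(x)))\le\e$ and $d(g(y),f(\Psi(y)))\le\e$. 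Chasing $x$ around the left pentagon of Diagram~\ref{eq:interleaving} using the explicit formulas for $\widehat u$, $\widehat\TT_{(0\le2\e)}$, $\widehat\TT_\e[\psi]$ and $\widehat\mu_{\e,\e}$, the composite $\widehat\mu_{\e,\e,f}\circ\widehat\TT_\e[\psi]\circ\varphi$ sends $x$ to $\bigl(\Psi\Phi(x),f(x)\bigr)$, while $\widehat\TT_{(0\le2\e),f}\circ\widehat u_f$ sends $x$ to $\bigl(x,f(x)\bigr)$; commutativity forces $\Psi\Phi=I_\X$, and the symmetric chase of the right pentagon forces $\Phi\Psi=I_\Y$. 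Hence $\Phi$ is a homeomorphism satisfying the required bound.

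For the converse, given a homeomorphism $\Phi:\X\to\Y$ with $d(f(x),g(\Phi(x)))\le\e$ for all $x$, set $\Psi=\Phi^{-1}$ and define $\varphi(x)=(\Phi(x),f(x))$ and $\psi(y)=(\Psi(y),g(y))$. These are continuous (since $\Phi,\Psi,f,g$ are), function preserving, and land in the respective sets $\QQ_\e$: for $\psi$ one substitutes $x=\Psi(y)$ into the hypothesis to get $d(f(\Psi(y)),g(y))\le\e$. Running the same computation with the structure maps of $\widehat\TT$ as above then shows both pentagons of Diagram~\ref{eq:interleaving} commute, so $(\varphi,\psi)$ is a weak $\e$-interleaving of $(\X,f)$ and $(\Y,g)$.

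I do not expect a genuine obstacle here; the one point to watch is the bookkeeping in the pentagon chase, in particular that $\widehat\mu_{\e,\e,f}$ forgets the intermediate $\M$-coordinate $g(\Phi(x))$ in $\bigl((\Psi\Phi(x),g(\Phi(x))),f(x)\bigr)$, which is exactly the role of the coherence natural transformation and the reason (as in the $\R$-space case) that the strict, triangles-only version of the interleaving relation would not suffice. One could instead deduce the $\M=(\R,\|\cdot\|_\infty)$ special case from Theorems~\ref{theorem:SliceGeneral} and~\ref{theorem:LInftyForSlice}, but for a general metric space $\M$ the direct diagram chase above is the cleanest route.
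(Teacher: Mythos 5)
Your proposal is correct and matches the paper's intent exactly: the paper's proof of this theorem is a one-line remark that it proceeds just as in Theorem~\ref{theorem:LInftyForSlice}, and you have carried out precisely that argument with the appropriate replacements (the second coordinate of $\varphi(x)$ is now forced to be $f(x)$ by function preservation for $p_2$, and $\widehat\mu_{\e,\e}$ discards the intermediate $\M$-coordinate). The pentagon chase, the role of the coherence natural transformation, and the handling of the non-homeomorphic case are all as the paper intends.
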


\begin{proof}
This proof proceeds in exactly the same manner as that of Theorem~\ref{theorem:LInftyForSlice}.
\end{proof}

\section{Stability Theorems}
\label{Sec:Stability}

So far, we have seen that the interleaving distance gives a very general framework for defining a distance on a category, and that this framework encompasses many commonly used metrics.
The next goal is to understand how these metrics relate to each other.
In particular, in this section we define colax $[0,\infty)$-equivariant functors of categories with a flow and show that they are 1-Lipschitz (non-expansive) to the respective interleaving distances.
We then show that some important 1-Lipschitz maps known from TDA can be realized as colax $[0,\infty)$-equivariant functors of categories with a flow, thus giving alternative proofs to integral stability results.

\subsection{Stability of $[0,\infty)$-equivariant functors}
Firstly, we define maps between categories with a flow.
\begin{definition}
	\label{definition:equivariant}
A \textbf{colax $[0,\infty)$-equivariant functor} $\HH:\CC\to\DD$ of categories with a flow $\CC=(\CC,\TT,u,\mu)$ and $\DD=(\DD,\SS,v,\lambda)$ is an ordinary functor $\HH:\CC\to\DD$ together with, for each $\e\geq0$, a natural transformation $\eta_{\e}:\mathcal{H}\mathcal{T}_{\e}\Rightarrow\mathcal{S}_{\e}\mathcal{H}$
such that the diagrams
\begin{equation*}
\begin{array}{c}
\begin{tikzcd}
\&
\HH
\arrow[ld, swap, Rightarrow, "{I_{\HH} u}"]
\arrow[rd,  Rightarrow, "{v I_{\HH}}"]
\\
\HH\TT_{0}
\arrow[rr, Rightarrow, "{\eta_{0}}"]
\& \&
\SS_{0}\HH
\end{tikzcd}
\begin{tikzcd}
\HH\TT_{\e}
\arrow[r, Rightarrow, "{\eta_{\e}}"]
\arrow[d, swap, Rightarrow, "{I_{\HH}\TT_{(\e\leq \zeta)}}"]
\&
\SS_{\e}\HH \arrow[d, Rightarrow, "{\SS_{(\e\leq \zeta)} I_{\HH}}"]
\\
\HH\TT_{\zeta} \arrow[r,  Rightarrow, "{\eta_{\zeta}}"]
\&
\SS_{\zeta}\HH
\end{tikzcd}
\\
\begin{tikzcd}
\HH\TT_{\e}\TT_{\zeta}
\arrow[r, Rightarrow, "{\eta_{\e} I_{\TT_{\zeta}}}"]
\arrow[d, swap, Rightarrow, "{I_{\HH}\mu_{\e,\zeta}}"]
\&
\SS_{\e}\HH\TT_{\zeta}
\arrow[r, Rightarrow, "{I_{\SS_{\e}}\eta_{\zeta}}"]
\&
\SS_{\e}\SS_{\zeta}\HH
\arrow[d, Rightarrow, "{\lambda_{\e,\zeta} I_{\HH}}"]
\\
\HH\TT_{\e+\zeta}\arrow[rr,  Rightarrow, "{\eta_{\e+\zeta}}"]
\&\&
\SS_{\e+\zeta}\HH
\end{tikzcd}
\end{array}
\end{equation*}
commute for all $\e,\zeta\geq0$.
\end{definition}
If all $\eta_{\e}$ are identities then $\HH$ is called a \textbf{strict $[0,\infty)$-equivariant functor}.
If the categories $(\CC,\TT),(\DD,\SS)$ have strict flows, then a strict $[0,\infty)$-equivariant functor $\HH:(\CC,\TT)\to(\DD,\SS)$ is simply a functor $\HH:\CC\to\DD$ such that $\HH\TT_{\e}=\SS_{\e}\HH$ for all $\e\geq0$.
If in particular $\HH:\CC\to\DD$ is an equivalence of categories, then $\HH$ is called a \textbf{strict $[0,\infty)$-equivariant equivalence}.

With these definitions in hand, we now turn to understanding the relationship between the different metrics under a colax $[0,\infty)$-equivariant functor.

\begin{theorem}
\label{theorem:stability}
Let $\HH:\CC\to\DD$ be a colax $[0,\infty)$-equivariant functor of categories with a flow $\CC=(\CC,\TT,u,\mu)$ and $\DD=(\DD,\SS,v,\lambda)$.
Then, $\mathcal{H}:\CC\to\DD$ is 1-Lipschitz to the respective interleaving distances, i.e.
\begin{equation*}
d_{(\DD,\SS)}(\HH a,\HH b)\leq d_{(\CC,\TT)}(a,b)
\end{equation*}
for every pair of objects $a,b$ in $\CC$.
\end{theorem}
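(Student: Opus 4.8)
The plan is to show that a colax $[0,\infty)$-equivariant functor carries weak $\e$-interleavings to weak $\e$-interleavings; the Lipschitz estimate then follows at once by taking infima.

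First I would fix $\e\geq0$ together with a weak $\e$-interleaving $(\varphi,\psi)$ of $a$ and $b$ in $(\CC,\TT)$, so that $\varphi:a\to\TT_\e b$ and $\psi:b\to\TT_\e a$ make the two pentagons of Equation~\ref{eq:interleaving} commute. Applying the underlying functor $\HH$ gives morphisms $\HH\varphi:\HH a\to\HH\TT_\e b$ and $\HH\psi:\HH b\to\HH\TT_\e a$ in $\DD$, and post-composing with the colax structure maps produces
\[
\varphi':=(\eta_\e)_{b}\circ\HH\varphi:\HH a\to\SS_\e\HH b,
\qquad
\psi':=(\eta_\e)_{a}\circ\HH\psi:\HH b\to\SS_\e\HH a.
\]
The claim to be verified is that $(\varphi',\psi')$ is a weak $\e$-interleaving of $\HH a$ and $\HH b$ in $(\DD,\SS)$.

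The core of the argument is checking the left-hand pentagon of Equation~\ref{eq:interleaving} for $(\HH a,\HH b,\varphi',\psi')$; the right-hand one is identical after exchanging $a$ and $b$. I would expand the target composite $\lambda_{\e,\e,\HH a}\circ(\SS_\e\psi')\circ\varphi'$ and rewrite it in five moves: (i) naturality of the transformation $\eta_\e:\HH\TT_\e\Rightarrow\SS_\e\HH$ applied to the $\CC$-morphism $\psi$, which slides $\SS_\e\HH\psi$ past a component of $\eta_\e$; (ii) the third coherence diagram of Definition~\ref{definition:equivariant} (the $\mu$--$\lambda$ compatibility), evaluated at $a$ with both indices equal to $\e$, which replaces the resulting composite of two $\eta_\e$-components with $\lambda_{\e,\e}$ by $(\eta_{2\e})_a\circ\HH\mu_{\e,\e,a}$; (iii) functoriality of $\HH$ together with the left-hand $\CC$-pentagon, which rewrites $\HH\mu_{\e,\e,a}\circ\HH\TT_\e\psi\circ\HH\varphi$ as $\HH\TT_{(0\leq2\e),a}\circ\HH u_a$; (iv) the second coherence diagram of Definition~\ref{definition:equivariant} (compatibility of $\eta$ with $\TT_{(\e\leq\zeta)}$ and $\SS_{(\e\leq\zeta)}$), evaluated at $a$ with indices $0\leq2\e$, which turns $(\eta_{2\e})_a\circ\HH\TT_{(0\leq2\e),a}$ into $\SS_{(0\leq2\e),\HH a}\circ(\eta_0)_a$; and (v) the unit triangle of Definition~\ref{definition:equivariant}, evaluated at $a$, which identifies $(\eta_0)_a\circ\HH u_a$ with $v_{\HH a}$. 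Chaining these equalities yields exactly $\SS_{(0\leq2\e),\HH a}\circ v_{\HH a}$, the other side of the pentagon.

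With $(\varphi',\psi')$ established as a weak $\e$-interleaving, Definition~\ref{definition:interleavings} gives $d_{(\DD,\SS)}(\HH a,\HH b)\leq\e$ whenever $a$ and $b$ are weakly $\e$-interleaved; taking the infimum over all such $\e$ gives $d_{(\DD,\SS)}(\HH a,\HH b)\leq d_{(\CC,\TT)}(a,b)$, the case $d_{(\CC,\TT)}(a,b)=\infty$ being vacuous. I expect no conceptual obstacle: the only delicate point is the bookkeeping in step (i), where one must whisker $\eta_\e$ correctly so that the term $\SS_\e\psi'$ coming from the $\DD$-side pentagon is re-expressed through $\HH\TT_\e\psi$ and components of $\eta$, thereby exposing the image under $\HH$ of the $\CC$-side pentagon. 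Once the fully subdivided pentagon is drawn, every cell is one of the three coherence axioms of Definition~\ref{definition:equivariant}, a naturality square, a flow axiom, or functoriality of $\HH$.
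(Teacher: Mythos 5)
Your argument is correct and is essentially the paper's proof: you define the same interleaving morphisms $\Phi=\eta_{\e,b}\circ\HH\varphi$ and $\Psi=\eta_{\e,a}\circ\HH\psi$, and the five equational moves you describe—naturality of $\eta_\e$, the $\mu$--$\lambda$ coherence square, $\HH$ applied to the $\CC$-pentagon, the monotonicity square, and the unit triangle—are exactly the five cells of the paper's pasting diagram (4.3), merely presented as a chain of equalities rather than as one subdivided polygon.
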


\begin{proof}
To show that $\mathcal{H}$ is 1-Lipschitz we have to show that $\HH$ sends weak $\e$-interleavings in $\CC$ to weak $\e$-interleavings in $\DD$.
Specifically, let $a,b$ be two objects in $\CC$ which are weakly $\e$-interleaved;
we will show that $\HH a,\HH b$ are also weakly $\e$-interleaved.

Since $a,b$ are weakly $\e$-interleaved, there exists a pair of morphisms $\varphi:a\to\TT_{\e}b$ and $\psi:b\to\TT_{\e}a$ in $\CC$, such that the diagram of Equation~\ref{eq:interleaving} commutes.

Consider the following diagram.
\begin{equation}
\label{eq:H-proof-big-dgm}
\begin{tikzcd}
\SS_{0}\HH a
\arrow[dddd, "{\TT_{(0\leq2\e),\HH a}}"']
\&
\&
\HH a
\arrow[ll, "{v_{\HH a}}"']
\arrow[ld, swap, "{\HH u_{a}}"']
\arrow[rd, swap, "{\HH \varphi}"']
\\
\&
\HH\TT_{0}a
\arrow[lu,"{\eta_{0,a}}"]
\arrow[d, "{\HH \TT_{(0\leq 2\e),a}}"']
\&
\&
\HH\TT_{\e}b
\arrow[dl, swap,"{\HH\TT_{\e}\psi}"]
\arrow[dr, "{\eta_{\e,b}}"]
\\
\&
\HH\TT_{2\e}a
\arrow[ldd, "{\eta_{2\e,a}}"' ]
\&
\HH\TT_{\e}\TT_{\e}a
\arrow[l, "{\HH\mu_{\e,\e,a}}"' ]
\arrow[rd, swap, "{\eta_{\e,\TT_{\e}a}}"' ]
\&
\&
\SS_{\e}\HH b
\arrow[dl,"{\SS_{\e}\HH\psi}"]
\\
\&
\&
\&
\SS_{\e}\HH \TT_{\e}a
\arrow[dl,"{\SS_{\e}\eta_{\e,a}}"]
\\
\SS_{2\e}\HH a
\&
\&
\SS_{\e}\SS_{\e}\HH a
\arrow[ll, "{\lambda_{\e,\e,\HH a}}"' ]
\end{tikzcd}
\end{equation}
Proceeding clockwise numbering the triangle as 1, the second region commutes as it is the functor $\HH$ applied to the left side of Equation~\ref{eq:interleaving},
the third region commutes by definition of $\eta_\e$ as a natural transformation, the fifth region commutes by the square diagram of Definition~\ref{definition:equivariant}, and the first and fourth regions commute by the additional commutative diagrams of Definition~\ref{definition:equivariant}.

Define $\Phi=\eta_{\e,b}\circ\HH\varphi$ and $\Psi=\eta_{\e,a}\circ\mathcal{H}\psi$.
Since Diagram~\ref{eq:H-proof-big-dgm} commutes, we get
$$\lambda_{\e,\e,\HH a}\circ\SS_{\e}\Psi\circ\Phi=\TT_{(0\leq2\e),\HH a}\circ v_{\HH a}.$$
Similarly we can construct an analogous diagram to show that
$$\lambda_{\e,\e,\HH b}\circ\SS_{\e}\Phi\circ\Psi=\TT_{(0\leq2\e),\HH b}\circ v_{\HH b}.$$
Therefore $(\Phi,\Psi)$ forms a weak $\e$-interleaving of $\HH a,\HH b$.
\end{proof}

\subsection{Stability of post-compositions between GPM-categories}
\label{ssec:GPM-stability}
Let $(\PP,\Omega,u,\mu)$ be a poset with a  flow (equivalently a superlinear family by Lemma~\ref{lemma:SuperlinearIsLax}) and let $(\DD^{\PP},\TT)$ and $(\EE^{\PP},\SS)$ be two GPM-categories with  flows induced by the flow $\Omega$ on the poset $\PP$.
Namely, $\TT_\e = - \cdot \Omega_\e$ and $\SS_\e=-\cdot\Omega_\e$ are the $\e$-translations on $\DD^\PP$ and $\EE^\PP$ respectively.
Now let $\Hfunc: \DD \to \EE$ be a functor and consider the post-composition functor $\Hfunc\cdot-:\DD^{\PP}\to\EE^{\PP}$, that sends each functor $\Ffunc:\PP\to\DD$ to the functor $\Hfunc\Ffunc:\PP\to\EE$.

In \cite{Bubenik2015}, the authors investigate the Lipschitz properties of such functors $\Hfunc$ with respect to interleavings  in the sense of Definition~\ref{definition:BubenikInterleaving}.
Here, we show that these functors fit into our flow framework, and
we see how this is connected to their result.

\begin{theorem}
\label{theorem:postCompIsOplax}
The post composition functor $\HH=\Hfunc\cdot-$ is colax $[0,\infty)$-equivariant.
\end{theorem}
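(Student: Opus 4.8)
The plan is to exhibit the structure natural transformations that make $\HH=\Hfunc\cdot-$ colax $[0,\infty)$-equivariant, and in fact to observe that they may be taken to be identities, so that $\HH$ is \emph{strict} $[0,\infty)$-equivariant and hence a fortiori colax. The crucial point is that post-composition by $\Hfunc$ and pre-composition by $\Omega_\e$ commute strictly: for any $\Ffunc\colon\PP\to\DD$ we have $\HH\TT_\e(\Ffunc)=\Hfunc\cdot(\Ffunc\cdot\Omega_\e)=(\Hfunc\cdot\Ffunc)\cdot\Omega_\e=\SS_\e\HH(\Ffunc)$ by associativity of functor composition, and the same identity holds on morphisms (natural transformations of $\DD^\PP$) because left whiskering by $\Hfunc$ and right whiskering by $\Omega_\e$ commute by the interchange law. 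Thus $\HH\TT_\e=\SS_\e\HH$ on the nose, and I would set $\eta_\e\coloneqq\id_{\HH\TT_\e}$ for each $\e\geq 0$.

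With $\eta_\e$ the identity, each of the three coherence diagrams of Definition~\ref{definition:equivariant} reduces to the assertion that $\HH$ applied to a structure transformation of the flow $\TT$ equals the corresponding structure transformation of the flow $\SS$. Since both flows are induced by the same $\Omega$ on $\PP$, the coherence data of $\TT$ are $\hat u=-\cdot u$ and $\hat\mu_{\e,\zeta}=-\cdot\mu_{\e,\zeta}$, and those of $\SS$ are $\hat v=-\cdot u$ and $\hat\lambda_{\e,\zeta}=-\cdot\mu_{\e,\zeta}$, with the very same $u$ and $\mu$ coming from $\Omega$. I would then check the three diagrams by passing to components at an object $\Ffunc$ and a point $p\in\PP$: for the unit triangle, both $(\HH\hat u)_\Ffunc$ and $(\hat v\,I_\HH)_\Ffunc$ have $p$-component $\Hfunc(\Ffunc(u_p))$; for the square, both $(\HH\TT_{(\e\leq\zeta)})_\Ffunc$ and $(\SS_{(\e\leq\zeta)}I_\HH)_\Ffunc$ have $p$-component $\Hfunc(\Ffunc(\Omega_{(\e\leq\zeta),p}))$; and for the pentagon, both legs have $p$-component $\Hfunc(\Ffunc(\mu_{\e,\zeta,p}))$, using that $\PP$ is a poset so that the only content is that the two whiskerings agree, which is once more the interchange law.

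The main obstacle here is essentially nonexistent: the argument is bookkeeping with whiskering and the interchange law. The only points requiring care are keeping straight which side each whiskering acts on (post-composition by $\Hfunc$ on the left, pre-composition by $\Omega_\e$ on the right), and recording explicitly that the induced flow $\SS$ on $\EE^\PP$ has coherence data given by pre-composition with the same $u,\mu$ that define the flow $\Omega$ on $\PP$, so that ``$\HH$ of the $\TT$-data'' literally equals ``the $\SS$-data''. Having done this, one concludes that $\HH$ is in fact strict, and in particular colax, $[0,\infty)$-equivariant; combined with Theorem~\ref{theorem:stability}, this recovers the $1$-Lipschitz property of the post-composition functor with respect to the GPM interleaving distances, matching the corresponding result of \cite{Bubenik2015}.
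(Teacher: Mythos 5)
Your proposal is correct and follows essentially the same route as the paper: both take $\eta_\e$ to be the identity, justified by associativity of functor composition, and then observe that the coherence diagrams of Definition~\ref{definition:equivariant} hold because the flows on $\DD^\PP$ and $\EE^\PP$ are induced by the same data $(u,\mu)$ on $\PP$. You merely spell out the component-wise verification in more detail than the paper does, and you add the (valid) observation that with identity $\eta_\e$ the functor is in fact strict $[0,\infty)$-equivariant, not just colax.
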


\begin{proof}
Clearly $\HH$ is a functor, so we need to construct a natural transformation $\eta_{\e}:\HH\TT_{\e}\Rightarrow\SS_{\e}\HH$.
This is just the identity transformation: $\eta_{\e,\Ffunc}:\Hfunc(\Ffunc\Omega_{\e})=(\Hfunc\Ffunc)\Omega_{\e}$, whose naturality follows from the associativity property of composition of functors.
It is easy to check using naturality that $\HH$ together with $\eta_\e$, for $\e\geq0$, satisfies the required commutative diagrams of Definition~\ref{definition:equivariant}.
\end{proof}
\begin{corollary}
Let $\Ffunc, \Gfunc \in \DD^\PP$ and $\Hfunc: \DD \to \EE$.
Then
\begin{equation*}
d_{(\DD^\PP, -\cdot\Omega)}(\Hfunc \Ffunc, \Hfunc \Gfunc) \leq d_{(\EE^\PP, -\cdot \Omega)}(\Ffunc,\Gfunc).
\end{equation*}

\end{corollary}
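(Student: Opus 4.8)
The plan is to apply Theorem~\ref{theorem:stability} directly. The corollary is an immediate consequence of Theorem~\ref{theorem:postCompIsOplax} together with the general stability result, so the proof reduces to checking that the hypotheses line up and then citing the right statements.

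First I would observe that by Theorem~\ref{theorem:postCompIsOplax}, the post-composition functor $\HH = \Hfunc \cdot - : \DD^\PP \to \EE^\PP$ is colax $[0,\infty)$-equivariant, where both GPM-categories carry the flows $\TT_\e = -\cdot\Omega_\e$ and $\SS_\e = -\cdot\Omega_\e$ induced from the flow $\Omega$ on $\PP$ (these flows exist by the discussion following Lemma~\ref{lemma:SuperlinearIsLax}). Next I would invoke Theorem~\ref{theorem:stability}, which says that any colax $[0,\infty)$-equivariant functor between categories with a flow is $1$-Lipschitz with respect to the associated interleaving distances. Applying this to $\HH$ with $a = \Ffunc$ and $b = \Gfunc$ gives
\begin{equation*}
d_{(\EE^\PP,\, -\cdot\Omega)}(\HH\Ffunc, \HH\Gfunc) \leq d_{(\DD^\PP,\, -\cdot\Omega)}(\Ffunc, \Gfunc),
\end{equation*}
and since $\HH\Ffunc = \Hfunc\Ffunc$ and $\HH\Gfunc = \Hfunc\Gfunc$ by definition of post-composition, this is exactly the claimed inequality.

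The only subtlety — really a matter of bookkeeping rather than a genuine obstacle — is to make sure the source and target categories in the corollary's statement are written in the order consistent with Theorem~\ref{theorem:stability}: the functor goes $\DD^\PP \to \EE^\PP$, so the distance on $\DD^\PP$ bounds the distance on $\EE^\PP$ after applying $\HH$. I would simply state the two-line deduction and be done; no diagram chase is needed here because all the diagrammatic work was already carried out in the proofs of Theorems~\ref{theorem:stability} and~\ref{theorem:postCompIsOplax}.
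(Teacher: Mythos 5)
Your proof is correct and takes the same route as the paper, which likewise just combines Theorem~\ref{theorem:postCompIsOplax} with Theorem~\ref{theorem:stability} in a one-line deduction. You also caught something the paper did not: the corollary as printed has the category subscripts reversed, since $\Ffunc,\Gfunc$ live in $\DD^\PP$ while $\Hfunc\Ffunc,\Hfunc\Gfunc$ live in $\EE^\PP$, so the inequality should read $d_{(\EE^\PP,\,-\cdot\Omega)}(\Hfunc\Ffunc,\Hfunc\Gfunc)\leq d_{(\DD^\PP,\,-\cdot\Omega)}(\Ffunc,\Gfunc)$ exactly as you wrote it; the version in the paper's corollary statement is a typo, and your careful bookkeeping fixed it.
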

\begin{proof}
The proof follows from combining Theorem~\ref{theorem:postCompIsOplax} with Theorem~\ref{theorem:stability}.
\end{proof}
We note that this corollary is closely related to the following theorem from \cite{Bubenik2015}.
\begin{theorem}
[Bubenik et al.~\cite{Bubenik2015}; Theorem~3.16]
\label{theorem:BubenikStability}
Let $\Ffunc, \Gfunc \in \DD^\PP$ and $\Hfunc: \DD \to \EE$.
Then
\begin{equation*}
d_\Omega(\Hfunc \Ffunc, \Hfunc \Gfunc) \leq d_\Omega(\Ffunc,\Gfunc).
\end{equation*}
\end{theorem}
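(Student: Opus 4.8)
The plan is to prove the inequality directly from Definition~\ref{definition:BubenikInterleaving}, by transporting interleavings along the post-composition operation $\Hfunc\cdot-$. Fix $\e\geq0$ and suppose $\Ffunc,\Gfunc$ are $\Omega_\e$-interleaved, witnessed by natural transformations $\varphi:\Ffunc\Rightarrow\Gfunc\Omega_\e$ and $\psi:\Gfunc\Rightarrow\Ffunc\Omega_\e$ making Diagram~\ref{eqn:BubenikInterleaving} commute. I would set $\varphi':=\Hfunc\cdot\varphi$ and $\psi':=\Hfunc\cdot\psi$, i.e.\ the natural transformations whose components at $p\in\PP$ are $\Hfunc(\varphi_p)$ and $\Hfunc(\psi_p)$. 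The first point to check is that these have the right (co)domains: $\varphi':\Hfunc\Ffunc\Rightarrow(\Hfunc\Gfunc)\Omega_\e$ and $\psi':\Hfunc\Gfunc\Rightarrow(\Hfunc\Ffunc)\Omega_\e$. This amounts to the identity $\Hfunc\cdot(\Ffunc\Omega_\e)=(\Hfunc\Ffunc)\Omega_\e$, which is just associativity of composition of functors: precomposing with $\Omega_\e$ and postcomposing with $\Hfunc$ commute as operations.

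The heart of the argument is the observation that $\Hfunc\cdot-$ is compatible with every piece of structure appearing in Diagram~\ref{eqn:BubenikInterleaving}. Concretely, for any natural transformation $\alpha$ between functors $\PP\to\DD$ one has $\Hfunc\cdot(\alpha\Omega_\e)=(\Hfunc\cdot\alpha)\Omega_\e$, and $\Hfunc\cdot(\Ffunc\eta_\e)=(\Hfunc\Ffunc)\eta_\e$; both follow by inspecting components, and in particular neither requires $\Omega$ to be strict. Since $\Hfunc\cdot-:\DD^\PP\to\EE^\PP$ is an ordinary functor, it carries the commuting square and the two commuting triangles of Diagram~\ref{eqn:BubenikInterleaving} for the pair $(\Ffunc,\Gfunc)$ to the analogous commuting diagram for the pair $(\Hfunc\Ffunc,\Hfunc\Gfunc)$ with $(\varphi',\psi')$ in place of $(\varphi,\psi)$. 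Hence $\Hfunc\Ffunc$ and $\Hfunc\Gfunc$ are $\Omega_\e$-interleaved.

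Consequently every $\e$ admitting an $\Omega_\e$-interleaving of $\Ffunc,\Gfunc$ also admits one of $\Hfunc\Ffunc,\Hfunc\Gfunc$, so passing to infima gives $d_\Omega(\Hfunc\Ffunc,\Hfunc\Gfunc)\leq d_\Omega(\Ffunc,\Gfunc)$. An alternative, more structural route is to assemble the results already in this section: Theorem~\ref{theorem:postCompIsOplax} shows $\Hfunc\cdot-$ is colax $[0,\infty)$-equivariant, Theorem~\ref{theorem:stability} then yields $1$-Lipschitzness for the weak interleaving distances $d_{(\DD^\PP,-\cdot\Omega)}$ and $d_{(\EE^\PP,-\cdot\Omega)}$, and Theorem~\ref{theorem:BubenikVsOurDefns} identifies these with $d_\Omega$ when $\Omega$ is strict; but the direct argument above handles the lax case with no extra work. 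I do not expect a genuine obstacle: the only thing requiring care is the bookkeeping that identifies $\Hfunc\cdot(-\cdot\Omega_\e)$ with $(-\cdot\Omega_\e)\circ(\Hfunc\cdot-)$, which is the interchange law relating the two whiskerings by $\Hfunc$ and by $\Omega_\e$.
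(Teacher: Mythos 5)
The paper does not actually prove this theorem: it is stated as a citation to Bubenik et al.\ \cite{Bubenik2015}, Theorem~3.16, and the surrounding text only remarks that in the strict case it can also be recovered from Theorem~\ref{theorem:postCompIsOplax} together with Theorem~\ref{theorem:stability} and the equality case of Theorem~\ref{theorem:BubenikVsOurDefns}. Your direct argument is correct and is essentially the original one from \cite{Bubenik2015}: whiskering the interleaving data $(\varphi,\psi)$ on the left by $\Hfunc$ preserves every identity in Diagram~\ref{eqn:BubenikInterleaving}, because $\Hfunc(\alpha\Omega_\e)=(\Hfunc\alpha)\Omega_\e$ and $\Hfunc(\Ffunc\eta_\e)=(\Hfunc\Ffunc)\eta_\e$ hold componentwise, and $\Hfunc(\beta\circ\alpha)=(\Hfunc\beta)\circ(\Hfunc\alpha)$ by functoriality at each object of $\PP$. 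You also correctly diagnosed the limitation of the structural route through the flow machinery: for lax $\Omega$, Theorem~\ref{theorem:BubenikVsOurDefns} only yields $d_{(\DD^\PP,-\cdot\Omega)}\leq d_\Omega$, and the analogous inequality on the $\EE^\PP$ side points the wrong way, so one cannot close the chain $d_\Omega(\Hfunc\Ffunc,\Hfunc\Gfunc)\leq d_\Omega(\Ffunc,\Gfunc)$ without assuming strictness. Your direct proof therefore covers the full generality of the cited statement, whereas the paper's framework, by its own admission, only reproduces the strict case as a corollary.
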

In particular, if the flow $\Omega$ on $\PP$ is strict, Theorem~\ref{theorem:BubenikStability} is also a corollary of Theorem~\ref{theorem:postCompIsOplax}.

\subsection{Stability of persistent homology}
\label{ssec:persistence-stability}
In this section, we show that some of the key stability results of persistent homology are special cases of Theorem~\ref{theorem:stability}.
(These are the `soft' stability theorems, in the language of Bubenik et al.~\cite{Bubenik2015}.
The stability of persistence diagrams requires other technology.)

\subsubsection{$L^{\infty}$-stability of sublevel set persistent homology}
Let $((\mathbf{Top}\downarrow\R),\TT)$ be the category with a flow of $\R$-spaces with the flow $\TT$ as defined in Section~\ref{Ssec:R-spaces}, notably $\TT_\e(\X,f) = (\X_\e,f_\e)$.
Let $(\Top^{(\R,\leq)},-\cdot\Omega)$ be the GPM-category of one dimensional persistence modules with the strict flow $-\cdot\Omega$ which is induced (as a pre-composition) by the $\e$-shift functors $\Omega_{\e}:a\mapsto a+\e$ on the poset $(\R,\leq)$.
Define the \textbf{sublevel set filtration} functor $\SS:(\mathbf{Top}\downarrow\R)\to\Top^{(\R,\leq)}$ which sends an $\R$-space $(\mathbb{X},f)$ to its sublevel set filtration $\SS(\mathbb{X},f)=f^{-1}$: $a\mapsto f^{-1}(-\infty,a]$.

\begin{theorem}
	\label{theorem:SubLevelsetIsLax}
	The sublevel set filtration functor $\SS$ is colax $[0,\infty)$-equivariant.
\end{theorem}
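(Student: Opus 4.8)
The plan is to exhibit, for each $\e \geq 0$, the comparison natural transformation $\eta_\e : \SS\TT_\e \Rightarrow (-\cdot\Omega_\e)\SS$ and then verify the three coherence diagrams of Definition~\ref{definition:equivariant}. First I would unwind both composites on an $\R$-space $(\X,f)$. On one side, $\TT_\e(\X,f) = (\X_\e, f_\e)$ with $\X_\e = \X\times[-\e,\e]$ and $f_\e(x,t) = f(x)+t$, so $\SS\TT_\e(\X,f)$ is the filtration $a \mapsto f_\e^{-1}(-\infty,a] = \{(x,t) : f(x)+t \leq a,\ |t|\leq\e\}$. On the other side, $(\SS(\X,f)\cdot\Omega_\e)$ is the filtration $a \mapsto f^{-1}(-\infty, a+\e] = \{x : f(x) \leq a+\e\}$. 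The obvious map is the projection $(x,t) \mapsto x$: if $f(x)+t\leq a$ and $t\geq -\e$ then $f(x)\leq a - t \leq a+\e$, so the projection restricts to $f_\e^{-1}(-\infty,a] \to f^{-1}(-\infty,a+\e]$ for each $a$, and these are compatible with the filtration inclusions. This defines $\eta_{\e,(\X,f)}$ as a morphism in $\Top^{(\R,\leq)}$; naturality in $(\X,f)$ is immediate since $\TT_\e[\phi]$ and $\SS$ both act as $\phi$ on the $\X$-coordinate and the projection forgets the $[-\e,\e]$ factor.

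Next I would check the three diagrams. For the unit triangle, $\eta_0$ composed with $\SS u_f$ (which on the space level sends $x \mapsto (x,0)$, then projects back to $x$) must equal $v_{\SS(\X,f)} = (-\cdot u)$ applied to $\SS(\X,f)$, i.e.\ the inclusion $f^{-1}(-\infty,a] \hookrightarrow f^{-1}(-\infty,a]$ coming from $\Omega_0$-comparison; both are readily seen to be the identity-like maps on the nose. For the monotonicity square (in $\e \leq \zeta$), one compares the projection $\X_\e \to \X$ followed by the $\Omega$-comparison $f^{-1}(\cdot\, +\e) \to f^{-1}(\cdot\,+\zeta)$ against the inclusion $\X_\e \hookrightarrow \X_\zeta$ followed by the projection $\X_\zeta \to \X$; both composites send $(x,t)\mapsto x$ at the appropriate filtration levels, so they agree. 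For the associativity hexagon involving $\mu$, I would trace through: $\SS\TT_\e\TT_\zeta(\X,f)$ lives on $(\X\times[-\zeta,\zeta])\times[-\e,\e]$ with value $f(x)+t+s$; the map $\SS(I_\HH\mu_{\e,\zeta})$ is induced by $((x,t),s)\mapsto(x,t+s)$, while the two $\eta$'s project away $s$ and then $t$; the composite $\lambda_{\e,\zeta}$ on the $\Omega$-side is the comparison $f^{-1}(\cdot\,+\e+\zeta)$, and everything collapses to the single projection $((x,t),s)\mapsto x$ with the filtration bound $f(x)\leq a+\e+\zeta$ following from $f(x)+t+s\leq a$, $|t|\leq\zeta$, $|s|\leq\e$. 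All three verifications are diagram chases in the poset $(\R,\leq)$-indexed category, so commutativity reduces to checking equality of underlying continuous maps of topological spaces, which is where the projections make everything coincide.

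The main obstacle, such as it is, will be keeping the bookkeeping straight in the associativity hexagon — in particular being careful that $\Omega$ acts by pre-composition (a right action on the indexing poset) so that $\TT_\e\TT_\zeta = -\cdot\Omega_\zeta\Omega_\e$ rather than $-\cdot\Omega_\e\Omega_\zeta$, and that the coherence $\mu$ for $\TT$ on $\R$-spaces (adding the two displacement coordinates) must be matched against the coherence $\mu$ for $\Omega$ on $\R$ (the inequality $(a+\e)+\zeta \leq a+(\e+\zeta)$, an equality here). Since $\Omega$ on $(\R,\leq)$ is a strict flow, $\lambda_{\e,\zeta}$ is the identity, which simplifies the hexagon considerably. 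Once the underlying-map equalities are recorded, invoking Theorem~\ref{theorem:stability} gives the $1$-Lipschitz stability of sublevel set persistent homology as an immediate corollary, though that corollary is not part of the present statement.
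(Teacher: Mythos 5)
Your construction of $\eta_\e$ as the fiberwise projection $p_1 : f_\e^{-1}(-\infty,a]\to f^{-1}(-\infty,a+\e]$, the naturality check, and the verification of the three coherence diagrams is exactly the paper's argument (the paper compresses the coherence checks to ``it is a formality''). One small slip: you write $\TT_\e\TT_\zeta = -\cdot\Omega_\zeta\Omega_\e$, but $\TT$ is the slice-category flow $(\X,f)\mapsto(\X\times[-\e,\e],f_\e)$, not pre-composition; the right-action reversal you describe applies to the codomain flow $\SS_\e = -\cdot\Omega_\e$, and as you correctly note it is immaterial here since $\Omega$ is strict and commutative.
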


\begin{proof}
	Firstly we need to construct for each $\e\geq0$, a natural transformation $\eta_{\e}:\SS\TT_{\e}\Rightarrow(-\cdot\Omega_{\e})\SS$,  where $\eta_{\e,f}:f_{\e}^{-1}\mapsto f^{-1}\Omega_{\e}$.
	Define $\eta_{\e,f}=(\eta_{\e,f,a})_{a\geq0}$, where $\eta_{\e,f,a}:=p_1:f_\e\inv(-\infty,a] \to f\inv(-\infty,a+\e]$ is the projection of $f_{\e}^{-1}(-\infty,a] \subseteq \X \times [-\e,\e]$ to the first coordinate.
	Then we check that the diagram
	\begin{equation*}
		\begin{array}{c}
			\begin{tikzcd}
				f_{\e}^{-1}(-\infty,a]
				\arrow[r, "{p_1}"]
				\arrow[d, "{\phi_{\e}}"']
				\&
				f^{-1}(-\infty,a+\e]
				\arrow[d,  "{\phi}"]
				\\
				g_{\e}^{-1}(-\infty,a]
				\arrow[r,  "{p_1}"]
				\&
				g^{-1}(-\infty,a+\e]
			\end{tikzcd}
			\begin{tikzcd}
				(x,t)
				\arrow[r, mapsto]
				\arrow[d, mapsto]
				\&
				x
				\arrow[d, mapsto]
				\\
				(\varphi(x),t)
				\arrow[r, mapsto]
				\&
				\varphi(x)
			\end{tikzcd}
		\end{array}
	\end{equation*}
	commutes for any morphism $\phi$ in the slice category.
	Thus the collection  $\eta_{\e} = (\eta_{\e,f})_{f}$ forms a natural transformation, i.e the diagram
	\begin{equation*}
		\begin{tikzcd}
			f_{\e}^{-1}
			\arrow[r, Rightarrow, "{\eta_{\e,f}}"]
			\arrow[d, Rightarrow, "{\mathcal{T}_{\e}[\varphi]\circ-}"']
			\&
			f^{-1}\Omega_{\e}
			\arrow[d, Rightarrow, "{\mathcal{S}_{\e}[\varphi\circ-]}"]
			\\
			g_{\e}^{-1}
			\arrow[r, Rightarrow, "{\eta_{\e,g}}"]
			\&
			g^{-1}\Omega_{\e}
		\end{tikzcd}
	\end{equation*}
	commutes.
	Hence $\eta_{\e}:\SS\TT_{\e}\Rightarrow(-\cdot\Omega_{\e})\SS$.
	It is then a formality to check that the sublevel set filtration functor $\SS$ together with the natural transformations $\eta_\e$ satisfy the axioms of a colax $[0,\infty)$-equivariant functor.
\end{proof}
Let $k$ be  field and $p$ a nonnegative integer.
Consider the $p$-dimensional homology functor (with coefficients in $k$), $\HH_{p}:\Top\to\Vect_{k}$.
We define the \textbf{$p$-dimensional sublevel set persistent homology functor} $\HH_p\SS$ as the composition
\begin{equation*}
	\begin{tikzcd}
		((\Top\downarrow\R),\TT)
		\arrow[r, "{\SS}"]
		\arrow[rr, bend left, "{\HH_p\SS}"]
		\&
		(\Top^{(\R,\leq)},-\cdot\Omega)
		\arrow[r, "{\HH_{p}\cdot-}"]
		\&
		(\Vect_{k}^{(\R,\leq)}-\cdot\Omega).
	\end{tikzcd}
\end{equation*}

\begin{lemma}
The $p$-dimensional sublevel set persistent homology functor $\HH_p\SS$ is colax $[0,\infty)$-equivariant.
\label{corollary:SPH}
\end{lemma}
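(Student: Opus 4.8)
The plan is to observe that $\HH_p\SS$ is a composition of two colax $[0,\infty)$-equivariant functors and then invoke the fact that the composition of colax $[0,\infty)$-equivariant functors is again colax $[0,\infty)$-equivariant. Concretely, by Theorem~\ref{theorem:SubLevelsetIsLax} the sublevel set filtration functor
$$\SS:((\Top\downarrow\R),\TT)\to(\Top^{(\R,\leq)},-\cdot\Omega)$$
is colax $[0,\infty)$-equivariant, with structure natural transformations $\eta_\e:\SS\TT_\e\Rightarrow(-\cdot\Omega_\e)\SS$ as constructed there. By Theorem~\ref{theorem:postCompIsOplax}, the post-composition functor
$$\HH_p\cdot-:(\Top^{(\R,\leq)},-\cdot\Omega)\to(\Vect_k^{(\R,\leq)},-\cdot\Omega)$$
is colax $[0,\infty)$-equivariant, with structure natural transformations that are in fact identities. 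So it remains only to check that composites of such functors inherit the colax equivariance structure.

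First I would spell out the composite structure. Given colax $[0,\infty)$-equivariant functors $\HH:(\CC,\TT,u,\mu)\to(\DD,\SS,v,\lambda)$ with data $\eta_\e:\HH\TT_\e\Rightarrow\SS_\e\HH$ and $\KK:(\DD,\SS,v,\lambda)\to(\EE,\RR,w,\rho)$ with data $\theta_\e:\KK\SS_\e\Rightarrow\RR_\e\KK$, the composite $\KK\HH$ carries the natural transformation defined by the pasting
$$\KK\HH\TT_\e \xRightarrow{\;\KK\eta_\e\;} \KK\SS_\e\HH \xRightarrow{\;\theta_\e\HH\;} \RR_\e\KK\HH.$$
Then I would verify the three axioms of Definition~\ref{definition:equivariant} for this composite. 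Each follows by pasting together the corresponding axiom diagram for $\HH$ (whiskered by $\KK$ on the left) with the one for $\KK$ (whiskered by $\HH$ on the right), using naturality of $\theta_\e$ to slide things past each other: the unit triangle comes from the two unit triangles plus the compatibility of $v$ with $w$; the monotonicity square comes from the two monotonicity squares; and the hexagon relating the $\mu$'s, $\lambda$'s, and $\rho$'s is obtained by stacking the hexagon for $\HH$ (whiskered by $\KK$) on top of the hexagon for $\KK$ (whiskered by $\HH$), interpolating with a naturality square for $\theta_\e$. This is exactly the statement that categories with a flow and colax $[0,\infty)$-equivariant functors form a category — a fact the paper alludes to in Section~\ref{sec:MetaTheorem} — so I would either cite that or include the short pasting argument inline.

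Applying this with $\HH=\SS$ (the sublevel set filtration) and $\KK=\HH_p\cdot-$ (post-composition with the homology functor), and using that the structure maps $\theta_\e$ for the post-composition functor are identities, the composite structure map for $\HH_p\SS$ is simply $\HH_p\cdot\eta_\e$, which is automatically a natural transformation $\HH_p\SS\TT_\e\Rightarrow(-\cdot\Omega_\e)\HH_p\SS$. Hence $\HH_p\SS$ is colax $[0,\infty)$-equivariant. I do not anticipate a serious obstacle here: the only mild bookkeeping is checking that the three coherence diagrams for the composite paste correctly, and because the second functor's structure maps are identities this degenerates to applying the functor $\HH_p\cdot-$ to the already-verified diagrams of Theorem~\ref{theorem:SubLevelsetIsLax}, together with the (trivial) fact that $\HH_p\cdot-$ preserves the coherence data of the strict flow $-\cdot\Omega$. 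The main conceptual point to state clearly is just that colax $[0,\infty)$-equivariance is closed under composition.
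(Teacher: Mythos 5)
Your proof is correct and takes essentially the same approach as the paper: decompose $\HH_p\SS = (\HH_p\cdot -)\circ\SS$, cite Theorems~\ref{theorem:SubLevelsetIsLax} and~\ref{theorem:postCompIsOplax} for the two factors, and conclude via closure of colax $[0,\infty)$-equivariance under composition. You actually supply more detail than the paper does on the composition step (the paper simply asserts it; you spell out the pasting of $\KK\eta_\e$ with $\theta_\e\HH$ and note the simplification when $\theta_\e$ is the identity), and you cite the correct theorem for the post-composition functor where the paper's reference appears to be a typo.
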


\begin{proof}
Post-composing with the $p$-dimensional homology functor $\HH_{p}$, is a functor of GPM-categories with the same domain poset $(\R,\leq)$, and so, by Theorem~\ref{theorem:BubenikVsOurDefns}, the functor $\HH_{p}\cdot-$ forms an colax $[0,\infty)$-equivariant functor of categories with a flow.
Also the sublevel set filtration functor $\SS$ is colax $[0,\infty)$-equivariant.
Hence, their composition $\HH_p\SS$ is a colax $[0,\infty)$-equivariant functor of categories with a flow.
\end{proof}

\begin{corollary}[Cohein-Steiner et al.~\cite{Cohen-Steiner2007}]
Sublevel set persistence $\HH_p\SS$ is 1-Lipschitz with respect to the interleaving distance, i.e.
 \begin{equation*}
 	d_{\Vect_{k}^{(\R,\leq)}}(\HH_p\SS(\X),\HH_p\SS(\Y))\leq d_{\infty}(\X,\Y).
 \end{equation*}
\end{corollary}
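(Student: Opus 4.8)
The plan is to deduce this corollary directly by assembling the machinery already in place, with essentially no new computation required. The key observation is that the $p$-dimensional sublevel set persistent homology functor $\HH_p\SS:((\Top\downarrow\R),\TT)\to(\Vect_k^{(\R,\leq)},-\cdot\Omega)$ has already been shown, in Lemma~\ref{corollary:SPH}, to be colax $[0,\infty)$-equivariant. Therefore Theorem~\ref{theorem:stability} applies verbatim: a colax $[0,\infty)$-equivariant functor is $1$-Lipschitz with respect to the respective interleaving distances, so
\begin{equation*}
d_{(\Vect_k^{(\R,\leq)},-\cdot\Omega)}(\HH_p\SS(\X,f),\HH_p\SS(\Y,g))\leq d_{((\Top\downarrow\R),\TT)}((\X,f),(\Y,g)).
\end{equation*}
First I would state this inequality, citing Lemma~\ref{corollary:SPH} for the colax equivariance and Theorem~\ref{theorem:stability} for the Lipschitz conclusion.

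The second ingredient is the identification of the source-side interleaving distance with the sup-norm distance. By Theorem~\ref{theorem:LInftyForSlice}, the interleaving distance $d_{((\Top\downarrow\R),\TT)}$ coincides with $d_\infty$ on $\R$-spaces. Combining this with the inequality above yields
\begin{equation*}
d_{\Vect_k^{(\R,\leq)}}(\HH_p\SS(\X,f),\HH_p\SS(\Y,g))\leq d_\infty((\X,f),(\Y,g)),
\end{equation*}
which is precisely the claimed statement once one adopts the abbreviated notation $d_{\Vect_k^{(\R,\leq)}}$ for $d_{(\Vect_k^{(\R,\leq)},-\cdot\Omega)}$ (as the paper permits when the flow is implicit) and writes $\X$ for an $\R$-space when the function is understood. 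So the proof is a two-line chain: apply Theorem~\ref{theorem:stability} to the functor $\HH_p\SS$, then rewrite the domain distance using Theorem~\ref{theorem:LInftyForSlice}.

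There is no real obstacle here — the content of the corollary has been front-loaded into the preceding lemmas. The only point worth a sentence of care is the bookkeeping of which flow sits on which category: the target carries the strict flow $-\cdot\Omega$ induced by the $\e$-shift functors on $(\R,\leq)$, while the source carries the (non-strict) flow $\TT$ thickening by $[-\e,\e]$, and it is with respect to exactly these two flows that $\HH_p\SS$ was shown to be colax equivariant in Lemma~\ref{corollary:SPH}. Once that alignment is noted, invoking Theorem~\ref{theorem:stability} and Theorem~\ref{theorem:LInftyForSlice} in sequence completes the argument. I would also remark that this recovers the classical sublevel-set stability theorem of Cohen-Steiner et al.~\cite{Cohen-Steiner2007}, which is the point of the corollary: a well-known hard result reappears as an immediate formal consequence of the actegory framework.
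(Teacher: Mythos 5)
Your proposal is correct and follows essentially the same route as the paper, which simply cites Lemma~\ref{corollary:SPH} and Theorem~\ref{theorem:stability}. Your version is in fact slightly more complete: you explicitly invoke Theorem~\ref{theorem:LInftyForSlice} to identify $d_{((\Top\downarrow\R),\TT)}$ with $d_\infty$, a step the paper's one-line proof leaves implicit but which is needed to match the stated form of the inequality.
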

\begin{proof}
The proof follows directly from Lemma~\ref{corollary:SPH} and Theorem~\ref{theorem:stability}.
\end{proof}

\subsubsection{$L^{\infty}$-stability of level set persistent homology}
Let $(\M,d)$ be a metric space.
Let $((\Top\downarrow\M),\widehat{\TT})$ be the slice category of $\M$-spaces with the flow $\widehat{\TT}$ as defined in Section~\ref{Ssec:M-spaces}.
Let $\Open(\M)$ be the poset whose objects are open subsets $\UU$ of $\M$ and morphisms are inclusions.
Consider the $\e$-translations on $\Open(\M)$, $\widehat{\Omega}_{\e}:\Open(\M)\to\Open(\M)$, $\UU\mapsto \UU^{\e}$, where
\begin{equation*}
\UU^{\e}:=\cup_{y\in \UU}\{m\in\M\mid d(m,y)\leq\e\}
\end{equation*}
We easily check that the collection $\widehat{\Omega}$ of all $\e$-translations $\widehat{\Omega}_{\e}$ on $\Open(\M)$ forms a flow on the poset $\Open(\M)$.
Let $\Top^{\Open(\M)}$ be the GPM-category of pre-cosheaves on $\M$.
Then, $\widehat{\Omega}$ induces a flow $-\cdot\widehat{\Omega}$ on $\Top^{\Open(\M)}$ (induced by pre-composition).
Consider the \textbf{level set filtration functor} $\LL:(\mathbf{Top}\downarrow\M)\to\Top^{\Open(\M)}$ that sends an $\M$-space $(\mathbb{X},f)$ to its level set filtration
\begin{equation*}
\LL(\mathbb{X},f):=f^{-1}: \UU\mapsto f^{-1}(\UU).
\end{equation*}

\begin{theorem}
\label{theorem:LevelsetIsLax}
The level set filtration functor $\LL$ is colax $[0,\infty)$-equivariant.
\end{theorem}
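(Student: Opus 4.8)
The plan is to mirror the structure of the proof of Theorem~\ref{theorem:SubLevelsetIsLax}, replacing sublevel sets $f^{-1}(-\infty,a]$ by preimages of open sets $f^{-1}(\UU)$ and the thickening $a \mapsto a+\e$ by the open-set thickening $\UU \mapsto \UU^{\e}$. First I would construct, for each $\e \geq 0$, a natural transformation $\eta_{\e}: \LL\widehat{\TT}_{\e} \Rightarrow (-\cdot\widehat{\Omega}_{\e})\LL$. Recall that $\widehat{\TT}_{\e}(\X,f) = (\QQ_{\e}(\X,f),p_2)$ with $\QQ_{\e}(\X,f) = \{(x,m)\in\X\times\M \mid d(f(x),m)\leq\e\}$, so that $\LL\widehat{\TT}_{\e}(\X,f)$ sends an open set $\UU$ to $p_2^{-1}(\UU) = \{(x,m)\in\QQ_{\e}(\X,f)\mid m\in\UU\}$, whereas $(-\cdot\widehat{\Omega}_{\e})\LL(\X,f)$ sends $\UU$ to $f^{-1}(\UU^{\e})$. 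I would define $\eta_{\e,f,\UU}$ to be the restriction of the first-coordinate projection $p_1$: if $(x,m)\in\QQ_{\e}(\X,f)$ with $m\in\UU$, then $d(f(x),m)\leq\e$ forces $f(x)\in\UU^{\e}$, so $x\in f^{-1}(\UU^{\e})$, and this is continuous. One checks that $\eta_{\e,f}=(\eta_{\e,f,\UU})_{\UU\in\Open(\M)}$ is natural in $\UU$ (the relevant square involves inclusions of open sets and commutes trivially since everything is a restriction of $p_1$), hence is a morphism in $\Top^{\Open(\M)}$.

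Next I would verify naturality of $\eta_{\e}$ in the $\M$-space variable: for a function-preserving map $\phi:(\X,f)\to(\Y,g)$, the square comparing $\eta_{\e,f}$ and $\eta_{\e,g}$ along $\widehat{\TT}_{\e}[\phi]$ and $(-\cdot\widehat{\Omega}_{\e})\LL[\phi]$ commutes because on points it reads $(x,m)\mapsto x \mapsto \phi(x)$ one way and $(x,m)\mapsto(\phi(x),m)\mapsto\phi(x)$ the other, exactly as in the sublevel set case. This shows $\eta_{\e}:\LL\widehat{\TT}_{\e}\Rightarrow(-\cdot\widehat{\Omega}_{\e})\LL$ is a natural transformation of functors $(\Top\downarrow\M)\to\Top^{\Open(\M)}$.

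Finally I would check the three coherence axioms of Definition~\ref{definition:equivariant}. The unit axiom compares $\eta_{0}$ composed with $\widehat{u}$ (which sends $x\mapsto(x,f(x))$) against $\widehat{v}$ on the codomain side, and on points both send $x$ through $f^{-1}(\UU^{0})$ compatibly since $f(x)\in\UU^0$ whenever $f(x)\in\UU$; the monotonicity axiom follows because all the maps $\widehat{\TT}_{(\e\leq\zeta)}$, $\widehat{\Omega}_{(\e\leq\zeta)}$, and $\eta_{\e}$ are identities-or-inclusions on underlying point sets; and the associativity hexagon, which compares $\eta_{\e+\zeta}\circ\LL\widehat{\mu}_{\e,\zeta}$ with $(\lambda_{\e,\zeta}\LL)\circ(\SS_{\e}\eta_{\zeta})\circ(\eta_{\e}\widehat{\TT}_{\zeta})$, reduces on points to the statement that iterated projection $((x,t),s)\mapsto x$ agrees with $((x,t),s)\mapsto(x,s)\mapsto x$, using the definition $\widehat{\mu}_{\e,\zeta,f}:((x,t),s)\mapsto(x,s)$. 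I expect the only mildly delicate point to be bookkeeping the open-set thickening inclusions $(\UU^{\zeta})^{\e}\subseteq\UU^{\e+\zeta}$ and confirming the relevant diagrams in $\Top^{\Open(\M)}$ commute objectwise; this is a formality given the poset structure of $\Open(\M)$, exactly parallel to the "formality" invoked at the end of the proof of Theorem~\ref{theorem:SubLevelsetIsLax}.
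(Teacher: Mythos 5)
Your proposal takes essentially the same route as the paper: define $\eta_{\e,f,\UU}$ as the restriction of the first-coordinate projection $p_1$, verify it lands in $f^{-1}(\UU^{\e})$, check naturality via the same $(x,m)\mapsto x$ versus $(x,m)\mapsto(\phi(x),m)\mapsto\phi(x)$ square, and then check the coherence axioms of Definition~\ref{definition:equivariant}. The only difference is that you spell out the unit, monotonicity, and associativity checks, which the paper dismisses as ``a formality''; your elaborations are correct.
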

\begin{proof}
Firstly we need to construct a natural transformation $\eta_{\e}:\LL\widehat{\TT}_{\e}\Rightarrow(-\cdot\widehat{\Omega}_{\e})\LL$, where $\eta_{\e,f}:\LL\widehat{\TT}_{\e}(\X,f)(\UU)\mapsto f^{-1}\widehat{\Omega}_{\e}(\UU)$.
That is, to construct for each $\M$-space, $(\X,f)$ a family of maps $\eta_{\e,f,\UU}:p_2^{-1}(\UU)\mapsto f^{-1}(\UU^{\e})$ natural for all $\UU$ and for all $(\X,f)$.
Same as before define $\eta_{\e,f,\UU}$ to be the restriction to the projection on the first coordinate.
Then we easily check that the diagram
\begin{equation*}
	\begin{array}{c}
		\begin{tikzcd}
			p_{2}^{-1}(\UU)
			\arrow[r, "{p_1}"]
			\arrow[d, "{\widehat{\TT}_{\e}[\varphi]}"']
			\&
			f^{-1}(\UU^\e)
			\arrow[d,  "{\varphi}"]
			\\
			p_2^{-1}(\UU)
			\arrow[r,  "{p_1}"]
			\&
			g^{-1}(\UU^\e)
		\end{tikzcd}
		\begin{tikzcd}
			(x,m)
			\arrow[r, mapsto]
			\arrow[d, mapsto]
			\&
			x
			\arrow[d, mapsto]
			\\
			(\varphi(x),m)
			\arrow[r, mapsto]
			\&
			\varphi(x)
		\end{tikzcd}
	\end{array}
\end{equation*}
commutes.
Thus the collection  $\eta_{\e} = (\eta_{\e,f})_{f}$ forms a natural transformation, i.e.~the diagram
\begin{equation*}
\begin{tikzcd}
f_{\e}^{-1}
\arrow[r, Rightarrow, "{\eta_{\e,f}}"]
\arrow[d, Rightarrow, "{\widehat{\TT}_{\e}[\varphi]\circ-}"']
\&
f^{-1}\widehat{\Omega}_{\e}
\arrow[d, Rightarrow, "{\varphi\widehat{\Omega}_{\e}}"]
\\
g_{\e}^{-1}
\arrow[r, Rightarrow, "{\eta_{\e,g}}"]
\&
g^{-1}\widehat{\Omega}_{\e}
\end{tikzcd}
\end{equation*}
commutes.
Hence $\eta_{\e}:\LL\widehat{\TT}_{\e}\Rightarrow(-\cdot\widehat{\Omega}_{\e})\LL$.
Then it is a formality to check that the level set filtration functor $\LL$ together with $\eta_\e$ satisfies the axioms of a colax $[0,\infty)$-equivariant functor.
\end{proof}

Let $k$ be  field and $p$ a nonnegative integer.
Consider the $p$-dimensional homology functor (with coefficients in $k$), $\HH_{p}:\Top\to\Vect_{k}$.
We define the \textbf{$p$-dimensional level set persistent homology $\HH_{p}\LL$} as the composition
\begin{equation*}
\begin{tikzcd}
((\Top\downarrow\M),\widehat{\TT})
\arrow[r, "{\LL}"]
\arrow[rr, bend left, "{\HH_{p}\LL}"]
\&
(\Top^{\Open(\M)},-\cdot\widehat{\Omega})
\arrow[r, "{\HH_{p}\cdot-}"]
\&
(\Vect_{k}^{\Open(\M)},-\cdot\widehat{\Omega}).
\end{tikzcd}
\end{equation*}

\begin{lemma}
The $p$-dimensional level set persistent homology functor $\HH_p\LL$ is colax $[0,\infty)$-equivariant.
\label{corollary:LPH}
\end{lemma}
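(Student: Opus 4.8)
The plan is to show that $\HH_p\LL$ factors as a composite of two colax $[0,\infty)$-equivariant functors and then invoke the fact that such functors are closed under composition. Specifically, by definition $\HH_p\LL$ is the composition
\[
((\Top\downarrow\M),\widehat{\TT})
\xrightarrow{\ \LL\ }
(\Top^{\Open(\M)},-\cdot\widehat{\Omega})
\xrightarrow{\ \HH_p\cdot-\ }
(\Vect_k^{\Open(\M)},-\cdot\widehat{\Omega}).
\]
First I would recall that $\LL$ is colax $[0,\infty)$-equivariant by Theorem~\ref{theorem:LevelsetIsLax}, equipped with the natural transformations $\eta_\e:\LL\widehat{\TT}_\e\Rightarrow(-\cdot\widehat{\Omega}_\e)\LL$ constructed there. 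Second, the post-composition functor $\HH_p\cdot-$ is a functor between two GPM-categories sharing the domain poset $\Open(\M)$ with flows both induced by $\widehat{\Omega}$; by Theorem~\ref{theorem:postCompIsOplax} it is colax $[0,\infty)$-equivariant, with the relevant natural transformations being identities (coming from associativity of functor composition, $\HH_p(\Ffunc\widehat{\Omega}_\e)=(\HH_p\Ffunc)\widehat{\Omega}_\e$).

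Then I would observe that colax $[0,\infty)$-equivariant functors compose: given $\HH:\CC\to\DD$ with structure maps $\eta_\e:\HH\TT_\e\Rightarrow\SS_\e\HH$ and $\KK:\DD\to\EE$ with structure maps $\theta_\e:\KK\SS_\e\Rightarrow\RR_\e\KK$, the composite $\KK\HH$ carries the natural transformation $(\RR_\e\HH$ needs no notation here$)$ obtained as the pasting $\KK\HH\TT_\e\xRightarrow{\KK\eta_\e}\KK\SS_\e\HH\xRightarrow{\theta_\e\HH}\RR_\e\KK\HH$. One checks the three coherence diagrams of Definition~\ref{definition:equivariant} for this pasted transformation: the unit triangle, the $(\e\leq\zeta)$-naturality square, and the $\mu$-compatibility hexagon. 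Each follows by pasting together the corresponding diagram for $\HH$ (whiskered by $\KK$ or $\RR$) with the corresponding diagram for $\KK$ (whiskered by $\HH$), together with interchange/naturality of the Godement product. This general lemma applied with $\HH=\LL$, $\KK=\HH_p\cdot-$ yields that $\HH_p\LL$ is colax $[0,\infty)$-equivariant.

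I do not expect a serious obstacle here; the statement is essentially a formal consequence of the two preceding results plus closure of colax equivariant functors under composition. The only mildly technical point is verifying that the pasted structure transformation satisfies the coherence hexagon, which is a routine pasting-diagram argument using interchange of horizontal and vertical composition of natural transformations; since the excerpt has already carried out analogous diagram chases (e.g.\ in the proof of Theorem~\ref{theorem:stability}), I would simply assert that this composition lemma holds—indeed it is the content of the forthcoming Section~\ref{sec:MetaTheorem} on functoriality of the interleaving construction—and conclude. Thus $\HH_p\LL$, being the composite of the colax $[0,\infty)$-equivariant functors $\LL$ (Theorem~\ref{theorem:LevelsetIsLax}) and $\HH_p\cdot-$ (Theorem~\ref{theorem:postCompIsOplax}), is itself colax $[0,\infty)$-equivariant.
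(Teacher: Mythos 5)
Your proposal is correct and follows essentially the same strategy as the paper: express $\HH_p\LL$ as the composite of $\LL$ and $\HH_p\cdot-$, invoke Theorems~\ref{theorem:LevelsetIsLax} and~\ref{theorem:postCompIsOplax}, and use closure of colax $[0,\infty)$-equivariant functors under composition (which the paper states without spelling out the pasted structure maps as you do). One small note: where you correctly cite Theorem~\ref{theorem:postCompIsOplax} for the post-composition functor, the paper's printed proof instead points to Theorem~\ref{theorem:BubenikVsOurDefns}, which appears to be a citation slip.
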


\begin{proof}
Post-composing with the $p$-dimensional homology functor $\HH_{p}$ is a functor between GPM-categories with the same exponent poset $\Open(\M)$, and so, by Theorem~\ref{theorem:BubenikVsOurDefns} the functor $\HH_{p}\cdot-$ is colax $[0,\infty)$-equivariant.
The same holds for $\LL$.
Hence, their composition $\HH_p\LL$ is colax $[0,\infty)$-equivariant.
\end{proof}

\begin{corollary}[Curry~{\cite[Lemma~15.1.9]{Curry2014}}]
Level set persistence $\HH_p\LL$ is 1-Lipschitz with respect to the interleaving distance, i.e.
 \begin{equation*}
 	d_{\Vect_{k}^{(\R,\leq)}}(\HH_p\LL(\X),\HH_p\LL(\Y))\leq d_{\infty}(\X,\Y).
 \end{equation*}
\end{corollary}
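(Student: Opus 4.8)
The plan is to assemble this corollary directly from two results already established in the excerpt, together with one of the identifications from Section~\ref{Ssec:M-spaces}. The substantive input is Lemma~\ref{corollary:LPH}, which says that the $p$-dimensional level set persistent homology functor
\[
\HH_p\LL:\big((\Top\downarrow\M),\widehat{\TT}\big)\longrightarrow\big(\Vect_k^{\Open(\M)},-\cdot\widehat{\Omega}\big)
\]
is colax $[0,\infty)$-equivariant. (Recall this was itself obtained by writing $\HH_p\LL$ as the composite of the level set filtration functor $\LL$, colax $[0,\infty)$-equivariant by Theorem~\ref{theorem:LevelsetIsLax}, with post-composition $\HH_p\cdot-$, colax $[0,\infty)$-equivariant by Theorem~\ref{theorem:postCompIsOplax} and Theorem~\ref{theorem:BubenikVsOurDefns}.)

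With that in hand, I would simply apply Theorem~\ref{theorem:stability}, which asserts that any colax $[0,\infty)$-equivariant functor is $1$-Lipschitz with respect to the interleaving distances on its source and target. Applied to $\HH_p\LL$, this yields, for all $\M$-spaces $(\X,f)$ and $(\Y,g)$,
\[
d_{(\Vect_k^{\Open(\M)},-\cdot\widehat{\Omega})}\big(\HH_p\LL(\X,f),\,\HH_p\LL(\Y,g)\big)\;\leq\;d_{((\Top\downarrow\M),\widehat{\TT})}\big((\X,f),(\Y,g)\big).
\]
To match the form of the statement I would then rewrite the right-hand side using the theorem of Section~\ref{Ssec:M-spaces} that identifies $d_{((\Top\downarrow\M),\widehat{\TT})}$ with $d_\infty$ on $\M$-spaces, turning the bound into $d_\infty((\X,f),(\Y,g))$; the left-hand side is precisely the interleaving distance on the precosheaf category $\Vect_k^{\Open(\M)}$ (the symbol $d_{\Vect_k^{(\R,\leq)}}$ in the statement is to be read as $d_{\Vect_k^{\Open(\M)}}$, since the indexing poset for level sets is $\Open(\M)$).

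I do not expect any genuine obstacle here: all of the content has been pushed into Lemma~\ref{corollary:LPH} and Theorem~\ref{theorem:stability}, and the corollary is a one-line consequence. The only things to be careful about are bookkeeping ones — keeping the indexing poset straight ($\Open(\M)$ for level sets, as opposed to $(\R,\leq)$ for sublevel sets), and making sure the interleaving distance on the target is the one induced by the pre-composition flow $-\cdot\widehat{\Omega}$ rather than some other flow on the functor category.
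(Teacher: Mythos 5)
Your proposal matches the paper's own proof exactly: cite Lemma~\ref{corollary:LPH} for colax $[0,\infty)$-equivariance of $\HH_p\LL$ and then apply Theorem~\ref{theorem:stability}, identifying the source interleaving distance with $d_\infty$ via the theorem of Section~\ref{Ssec:M-spaces}. Your observation that the target metric in the statement should be read as $d_{\Vect_k^{\Open(\M)}}$ rather than $d_{\Vect_k^{(\R,\leq)}}$ is a correct catch of a typographical slip in the paper.
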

\begin{proof}
The proof follows directly from Lemma~\ref{corollary:LPH} and Theorem~\ref{theorem:stability}.
\end{proof}

\section{Concluding Remarks}
\label{sec:MetaTheorem}

\subsection{Categories with a flow viewed as lax 2-functors}
\label{sec:Lax}
In more generality one can formalize additional constructions defined on categories by enriching the structure of the category in the weak, strict or strong sense.
A common example of (weakly) strictly enriched categories are (bi)2-categories \cite{Leinster1998}.
Any (bi)2-category is a category (weakly) enriched over the monoidal category $\Cat$ of all small categories.
Then a lax 2-functor is a weakly $\Cat$-enriched functor and an oplax 2-natural transformation is a weakly $\Cat$-enriched natural transformation and so on.
Recall from \cite{MacLane1978} that a \textbf{metacategory} is any model of the first-order theory of categories, and a \textbf{category} is a metacategory whose  morphisms form sets.
Moreover, assuming the existence of one Grothendieck universe $U$, sets and categories in $U$ are called \textbf{small} and categories not in $U$  are called \textbf{large}.
Therefore, we have the large category $\Cat$ of all small categories, and the metacategory $\CAT$ of all (possibly large) categories.
The category $\Cat$ forms a 2-category with the composition operation $\circ$ on functors and natural transformations.
Similarly, $\CAT$ forms a meta 2-category with respect to the composition operation $\circ$.
We remark that the monoidal category $[0,\infty)$ can be seen also as a strict 2-category with one object denoted by $\textbf{B}[0,\infty)$.
Given these facts, we can define equivalently any category with a flow as a lax 2-functor
\begin{equation*}
	\textbf{B}[0,\infty)\xrightarrow{\hspace{1em}\hspace{1em}}\mathbf{CAT}
\end{equation*}
from the strict $2$-category $\textbf{B}[0,\infty)$ to the meta $2$-category $\mathbf{CAT}$ of all categories.

Let $(\CC,\TT)$ and $(\DD,\SS)$ be two categories with a flow.
Thinking of a category with a flow conceptually as a lax 2-functor from the $2$-category $\textbf{B}[0,\infty)$ to the meta $2$-category $\mathbf{CAT}$, we can equivalently define a colax $[0,\infty)$-equivariant functor  $\HH:(\CC,\TT)\to(\DD,\SS)$ as an oplax natural transformation between these lax 2-functors.
\begin{equation*}
	\begin{tikzcd}
		\textbf{B}[0,\infty)
		\arrow[bend left=65]{rr}[name=LUU]{(\CC,\TT)}
		\arrow[bend right=65]{rr}[name=LDD, below]{(\DD,\SS)}
		\arrow[shorten <=10pt,shorten >=10pt, Rightarrow,to path=(LUU) -- (LDD)\tikztonodes]{r}{\HH}
		\&
		\&
		\CAT
	\end{tikzcd}
\end{equation*}

\subsection{Summary theorem}
In this paper we showed that the interleaving construction assigns
\begin{itemize}
	\item  to each category with a flow $(\CC,\TT)$ the structure of a Lawvere metric space $(\CC,d_{(\CC,\TT)})$,
	\item  to each colax $[0,\infty)$-equivariant functor $\HH:\CC\to\DD$  the structure of a 1-Lipschitz map $\HH:(\CC,d_{(\CC,\TT)})\to(\DD,d_{(\DD,\SS)})$ with respect to the interleaving distances.
\end{itemize}
Define
\begin{itemize}
	\item $[0,\infty)\text{-}\ACT$
	to be the metacategory whose objects are categories with a flow and whose morphisms are colax $[0,\infty)$-equivariant functors, and
	\item  $[0,\infty]\text{-}\CAT$ to be the metacategory whose objects are $[0,\infty]$-enriched categories (Lawvere metric spaces) and whose morphisms are $[0,\infty]$-enriched functors (namely 1-Lipschitz maps).
\end{itemize}
We have the following meta-theorem for the interleaving construction.
\begin{theorem}
The interleaving construction
\begin{equation*}
\begin{array}{ccc}
[0,\infty)\text{-}\ACT&\xrightarrow{\II}&[0,\infty]\text{-}\CAT\\
(\CC,\TT) & \mapsto & (\CC,d_{(\CC,\TT)})\\
(\CC,\TT)\xrightarrow{\HH}(\DD,\SS)&\mapsto & (\CC,d_{(\CC,\TT)})\xrightarrow{\HH}(\DD,d_{(\DD,\SS)})
\end{array}
\end{equation*}
is functorial.
\end{theorem}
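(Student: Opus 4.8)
The plan is to verify the three conditions that make $\II$ a functor of metacategories: that it is well defined on objects, that it is well defined on morphisms, and that it preserves identities and composition. The first two are already available: Theorem~\ref{theorem:interleavingExtPseudometric} shows that $(\CC,\TT)\mapsto(\CC,d_{(\CC,\TT)})$ lands in $[0,\infty]\text{-}\CAT$, and Theorem~\ref{theorem:stability} shows that a colax $[0,\infty)$-equivariant functor $\HH$, regarded as a map on object classes, is 1-Lipschitz and hence a morphism of $[0,\infty]\text{-}\CAT$; note that $\II$ simply forgets the coherence 2-cells $\eta_\e$, so $\II(\HH)$ is unambiguous. Thus the real content is to check that $[0,\infty)\text{-}\ACT$ is genuinely a metacategory --- i.e.\ that colax $[0,\infty)$-equivariant functors compose --- and that $\II$ respects that composition.

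First I would pin down the categorical structure of $[0,\infty)\text{-}\ACT$. The identity morphism on $(\CC,\TT)$ is $I_{\CC}$ equipped with the identity natural transformations $\eta_\e = I_{\TT_\e}:I_\CC\TT_\e\Rightarrow\TT_\e I_\CC$; all three diagrams of Definition~\ref{definition:equivariant} then degenerate to identities. For composition, let $\HH:(\CC,\TT)\to(\DD,\SS)$ carry the 2-cells $\eta_\e:\HH\TT_\e\Rightarrow\SS_\e\HH$ and let $\KK:(\DD,\SS)\to(\EE,\RR)$ carry the 2-cells $\theta_\e:\KK\SS_\e\Rightarrow\RR_\e\KK$; I would equip the composite $\KK\HH$ with the family
\[
(\theta_\e\, I_{\HH})\circ(I_{\KK}\,\eta_\e):\ \KK\HH\TT_\e\Rightarrow\RR_\e\KK\HH .
\]
Checking that these satisfy the three axioms of Definition~\ref{definition:equivariant} amounts to pasting: the unit triangle for $\KK\HH$ is the $\KK$-whiskering of the unit triangle for $\HH$ glued to the $\HH$-whiskering of the unit triangle for $\KK$; the naturality-in-$\e$ square (the second diagram of Definition~\ref{definition:equivariant}) is the horizontal paste of the corresponding squares for $\HH$ and $\KK$; and the $\mu$/$\lambda$ hexagon for $\KK\HH$ is obtained from the hexagons for $\HH$ and $\KK$ together with one application of the interchange law between the natural transformations $I_\KK\eta$ and $\theta I_\HH$. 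This diagram-chase is the main obstacle: it is routine 2-categorical bookkeeping --- indeed it is precisely the assertion, indicated in Section~\ref{sec:Lax}, that oplax natural transformations between lax 2-functors compose --- but it must be carried out with care. Once it is done, and once one notes (trivially) that identities and composites of 1-Lipschitz maps are again 1-Lipschitz, both $[0,\infty)\text{-}\ACT$ and $[0,\infty]\text{-}\CAT$ are metacategories.

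Finally, functoriality of $\II$ is immediate. On objects $\II$ is the stated assignment, and on a morphism $\HH$ the map $\II(\HH)$ is exactly the underlying map on objects of $\HH$, now recorded as 1-Lipschitz by Theorem~\ref{theorem:stability}. Since the underlying object map of $\KK\HH$ is the set-theoretic composite of those of $\KK$ and $\HH$, the underlying object map of $I_{(\CC,\TT)}$ is the identity of $\Obj\CC$, and composition and identities in $[0,\infty]\text{-}\CAT$ are just composition and identities of underlying maps, we conclude that $\II(\KK\HH)=\II(\KK)\,\II(\HH)$ and $\II(I_{(\CC,\TT)})$ is the identity of $(\CC,d_{(\CC,\TT)})$. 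Hence $\II$ is a functor of metacategories. The argument is insensitive to size, so it applies verbatim to the (possibly large) metacategories in the statement.
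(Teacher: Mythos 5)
Your proposal is correct and its essential observation coincides with the paper's: the construction $\II$ forgets the 2-cell data $\eta_\e$, so $\II(\KK\HH)$ and $\II(\KK)\II(\HH)$ have literally the same underlying object map, and identities are sent to identities. The paper's own proof is just these two assertions. What you add, and what the paper silently takes for granted, is the verification that $[0,\infty)\text{-}\ACT$ is actually a metacategory: you exhibit the composite 2-cells $(\theta_\e I_{\HH})\circ(I_{\KK}\eta_\e)$ and sketch the pasting argument showing they satisfy the three diagrams of Definition~\ref{definition:equivariant}. This is a genuine precondition for the statement to even typecheck (one must know $\KK\HH$ is a morphism in the source before asking whether $\II$ preserves composition), and the paper relies on the reader accepting it from the Section~\ref{sec:Lax} identification with oplax natural transformations. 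Filling it in is a worthwhile improvement in rigor, not a different proof strategy.
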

\begin{proof}
Indeed, if $\HH:(\CC,\TT)\to(\DD,\SS)$ and $\KK:(\DD,\SS)\to(\EE,\UU)$ are colax $[0,\infty)$-equivariant functors, then we get
	\begin{itemize}
		\item $\II[\KK]\II[\HH]=\KK\HH=\II[\KK\HH]$ and
		\item $\II[I_{[0,\infty)\text{-}\ACT}]=I_{[0,\infty]\text{-}\CAT}$, because the interleaving construction sends each identity equivalence $I_{(\CC,\TT)}$ to the identity isometry $I_{(\CC,d_{(\CC,\TT)})}$.
	\end{itemize}
\end{proof}

\subsection{Discussion}
\label{sec:Discussion}
In this paper, we gave a generalization of the interleaving distance, originally defined in the context of Topological Data Analysis (TDA), to the context of arbitrary categories with a flow.
We showed that many common metrics, not just those arising in TDA, can be viewed in this light.
We also investigated colax $[0,\infty)$-equivariant functors of categories with a flow, and provided a general stability result which specializes, in particular, to the seminal stability theorem for persistence diagrams.

In \cite{deSilva2016}, it was shown that another commonly used tool in TDA, the Reeb graph \cite{Reeb1946}, can be represented by particularly well behaved cosheaves.
In a subsequent paper \cite{deSilva2018}, we will show that the coherent $[0,\infty)$ framework created here also generalizes the stability theorem for the interleaving distance for Reeb graphs.
Further, we can define different oplax natural transformations to find bounds for the Reeb graph interleaving distance by the interleaving distance for simpler objects as exact computation of the Reeb graph interleaving distance is graph isomorphism hard.

Because this metric is so general, we expect that there are other example categories we have not yet thought of where this interleaving distance idea will be useful.
We should note that the infrastructure built here also has some immediate generalizations which we have not expanded on, such as replacing the functors or natural transformations with any combination of lax or oplax structures.
The restriction to lax functors and oplax natural transformations in this paper was merely due to the applications we are interested in.

\subsection*{Acknowledgements}
The authors gratefully acknowledge many helpful discussions and suggestions from Tom Leinster, Peter Bubenik, Justin Curry,
Amit Patel and Marco Varisco as well as the comments received from an anonymous reviewer which improved the state of the paper.
EM was partially supported by National Science Foundation through grants CMMI-1800466 and DMS-1800446.
\bibliographystyle{plain}
\bibliography{Lax}


\end{document}